\newtheorem{thm}{Theorem}[section]
\newtheorem{prop}[thm]{Proposition}
\newtheorem{cor}[thm]{Corollary}
\theoremstyle{definition}
\theoremstyle{remark}
\newtheorem{rem}[thm]{Remark}
\numberwithin{equation}{section}
\newcommand{\dif}{\mathrm{d}}
\newcommand{\totdif}{\mathrm{D}}
\newcommand{\mf}{\mathscr{F}}
\newcommand{\mr}{\mathbb{R}}
\newcommand{\prst}{\mathbb{P}}
\newcommand{\stred}{\mathbb{E}}
\newcommand{\mn}{\mathbb{N}}
\newcommand{\mt}{\mathbb{T}}
\begin{document}
\title[Strong Solutions of Semilinear SPDE\MakeLowercase{s}]{Strong Solutions of Semilinear Stochastic\\ Partial Differential Equations}

\author{Martina Hofmanov\'a}

\address{Department of Mathematical Analysis\\ Faculty of Mathematics and Physics, Char\-les University\\ Sokolovsk\'a~83\\ 186~75 Praha~8\\ Czech Republic\vspace{2mm}}
\address{Institute of Information Theory and Automation of the ASCR\\ Pod~Vod\'arenskou v\v{e}\v{z}\'i~4\\ 182~08 Praha~8\\ Czech Republic\vspace{2mm}}
\address{IRMAR, ENS Cachan Bretagne, CNRS, UEB\\ av. Robert Schuman\\ 35~170 Bruz\\ France}

\email{martina.hofmanova@bretagne.ens-cachan.fr}

\thanks{This research was supported in part by the GA\,\v{C}R Grant no. P201/10/0752.}
\subjclass{60H15, 35R60}
\keywords{Stochastic partial differential equations, strongly elliptic diffe\-rential operator, strongly continuous semigroup}

\begin{abstract}
We study the Cauchy problem for a semilinear stochastic partial differential equation driven by a finite-dimensional Wiener process. In parti\-cular, under the hypothesis that all the coefficients are sufficiently smooth and have bounded derivatives, we consider the equation in the context of power scale generated by a strongly elliptic differential operator. Application of semigroup arguments then yields the existence of a continuous strong solution.
\end{abstract}

\maketitle

\section{Introduction}

In the present paper, we consider the following semilinear stochastic partial differential equation driven by a finite-dimensional Wiener process:
\begin{equation}\label{semilin}
\begin{split}
\dif u&=\big[\mathcal{A}u+F(u)\big]\dif t+\sigma(u)\,\dif W,\quad x\in\mt^N,\;t\in(0,T),\\
u(0)&=u_0,
\end{split}
\end{equation}
where $-\mathcal{A}$ is a strongly elliptic differential operator, $F$ is generally nonlinear unbounded operator and the diffusion coefficient in the stochastic term is also nonlinear.

It is a well known fact in the field of PDEs and SPDEs that many equations do not, in general, have classical or strong solutions. Unlike deter\-ministic problems, in the case of stochastic equations we can only ask whether the solution is smooth in the space variable. Thus, the aim of the present work is to determine conditions on coefficients and initial data under which there exists a spatially smooth solution to \eqref{semilin}. 

The literature devoted to the regularity for linear SPDEs is quite extensive mainly due to Krylov (see \cite{krylov1}), Krylov and Rozovskii (see \cite{krylov2}, \cite{krylov3} and the refe\-rences therein) and Flandoli (see \cite {flandoli}). However, there seems to be less papers concentrated on regularity for nonlinear SPDEs. A class of second order parabolic semilinear SPDEs was studied by Gy\"ongy and Rovira (see \cite{gyongy}) but they were only concerned with $L^p$-valued solutions. So our work can be regarded as an extension of their result.
Related problems were also discussed by Zhang (see \cite{zhang}, \cite{zhang1}), however, his assumptions are not satisfied in our case.

The main difficulty in the case of semilinear equations lies in the nonlinearities $F$ and $\sigma$ as, in higher order Sobolev spaces, we cannot expect the Lipschitz condition to be satisfied and hence the fixed point argument cannot be applied. In fact, even the linear growth condition does not hold true in general since the norm of a superposition does not grow linearly with the norm of the inner function (cf. Proposition \ref{admissible}, Corollary \ref{admissible1} and Remark \ref{first}).

In order to deal with \eqref{semilin}, we proceed in several steps. First of all, we consider the equation in $L^{p}$ and apply the Banach fixed point theorem to conclude the existence of an $L^p$-valued mild solution. Next, we study the Picard iterations as processes having values in Sobolev spaces ($W^{1,p}$ and afterwards $W^{m,p}$) and find suitable uniform estimates which remain valid also for the limit process.

As an immediate consequence of the main result, we obtain a continuous $C^{k,\lambda}$-valued solution. Here, we use the Sobolev embedding theorem so the stochastic integration in Banach spaces (see \cite{b2}, \cite{ondrejat3}), i.e. $W^{m,p}$, allows us to weaken the smoothness assumptions on coefficients.

The paper is organised as follows. In Section 2, we review the basic setting and state our main result. In Section 3, we collect important preliminary results related to superposition operators. In the final section, these results are applied and the proof of the main theorem is established.

This work was motivated by our research in the field of degenerate parabolic SPDEs of second order (see \cite{hof}), where smooth solutions of certain approximate nondegenerate problems were needed in order to derive the so-called kinetic formulation and to obtain kinetic solution. Nevertheless, since the regularity result of the present paper is based on properties of strongly elliptic operators, generalization to higher order equations does not cause any additional problems.

\section{Setting and main result}
\label{notation}

Let us first introduce the notation which will be used later on. We will consider perio\-dic boundary conditions: $x\in \mt^N$ where $\mt^N$ is the $N$-dimensional torus. The Sobolev spaces on $\mt^N$ will be denoted by $W^{m,p}(\mt^N)$ and by $W^{m,p}(\mt^N;\mr^n)$ we will denote the space of all functions $z=(z_1,\dots,z_n):\mt^N\rightarrow \mr^n$ such that $z_i\in W^{m,p}(\mt^N),\,i=1,\dots,n.$

We now give the precise assumptions on each of the terms appearing in the above equation \eqref{semilin}. We will work on a finite-time interval $[0,T],\,T>0.$
The operator $-\mathcal{A}$ is a strongly elliptic differential operator of order $2l$ with variable coefficients of class $C^\infty(\mt^N)$. Let us assume, in addition, that $-\mathcal{A}$ is formally symmetric and positive, i.e. we assume that $0$ belongs to the resolvent set of $-\mathcal{A}$. As an example of this operator let us mention for instance the second order differential operator in divergence form given by
$$\mathcal{A}u=\sum_{i,j=1}^N\partial_{x_i}\big(A_{ij}(x)\partial_{x_j}u\big),$$
where the coefficients $A_{ij}=A_{ji}$ are real-valued smooth functions and satify the uniform ellipticity condition, i.e. there exists $\alpha>0$ such that
$$\sum_{i,j=1}^N A_{ij}(x)\xi_i\xi_j\geq\alpha|\xi|^2,\qquad\; \forall x\in\mt^N,\;\;\forall\xi\in\mr^N.$$

Let us now collect basic facts concerning strongly elliptic differential operators satisfying our hypotheses (for a detailed exposition we refer the reader to \cite{pazy}).
Set $D(\mathcal{A}_p)=W^{2l,p}(\mt^N)$. Then the linear unbounded operator $\mathcal{A}_p$ in $L^p(\mt^N)$ defined by
$$\mathcal{A}_p u=\mathcal{A}u,\qquad u\in D(\mathcal{A}_p),$$
is the infinitesimal generator of a bounded analytic semigroup on $L^p(\mt^N)$. Let us denote this semigroup by $\mathcal{S}_p$. Fractional powers of $-\mathcal{A}_p$ are well defined and their domains correspond to classical Sobolev spaces (see \cite[Section 10]{ama1}), i.e.
$$\Big(D\big((-\mathcal{A}_p)^\delta\big),\big\|(-\mathcal{A}_p)^\delta\cdot\big\|_{L^p(\mt^N)}\Big)\cong \big(W^{2l\delta,p}(\mt^N),\|\cdot\|_{W^{2l\delta,p}(\mt^N)}\big),\qquad\delta\geq0.$$
We will also make use of the following property of analytic semigroups (see \cite[Chapter 2, Theorem 6.13]{pazy}):
\begin{equation}\label{pazy12}
\begin{split}
\forall t>0\quad&\forall \delta>0\quad\text{the operator $\;(-\mathcal{A}_p)^\delta\mathcal{S}_p(t)\;$ is bounded and }\\
&\;\quad\quad\;\|(-\mathcal{A}_p)^\delta\mathcal{S}_p(t)\|_{L^p(\mt^N)}\leq C_{\delta,p}\, t^{-\delta}.
\end{split}
\end{equation}

The nonlinearity term $F$ is defined as follows: for any $p\in[2,\infty)$
\begin{equation*}
\begin{split}
F:L^p(\mt^N)&\longrightarrow W^{-2l+1,p}(\mt^N)\\
z&\longmapsto \sum_{|\alpha|\leq 2l-1}a_\alpha\,\totdif^\alpha f_\alpha(z),
\end{split}
\end{equation*}
where $a_\alpha\in\mr$ and the functions $f_\alpha,\,|\alpha|\leq 2l-1,$ are smooth enough (exact assumptions will be given later). Let us denote by $f$ the vector of functions $(f_\alpha\,;|\alpha|\leq 2l-1,\,a_\alpha\neq 0)$ and denote its length by $\gamma$.

Throughout this article we fix $(\Omega,\mf,(\mf_t)_{t\geq0},\prst)$, a stochastic basis with a complete, right-continuous filtration. Let $\mathcal{P}$ denote the predictable $\sigma$-algebra on $\Omega\times[0,T]$ associated with $(\mf_t)_{t\geq0}$. For simplicity we will only consider finite-dimensional noise, however, the result can be extended to the infinite-dimensional case. Let $\mathfrak{U}$ be a finite-dimensional Hilbert space and let $\{e_i\}_{i=1}^d$ be its orthonormal basis. The process $W$ is a $d$-dimensional $(\mf_t)$-Wiener process in $\mathfrak{U}$, i.e. it has an expansion of the form $W(t)=\sum_{i=1}^d W_i(t)\, e_i$, where $W_i,\,i=1,\dots,d,$ are mutually independent real-valued standard Wiener processes relative to $(\mf_t)_{t\geq 0}$. The diffusion coefficient $\sigma$ is then defined as 
\begin{equation*}
\begin{split}
\sigma(z):\mathfrak{U}&\longrightarrow L^p(\mt^N)\\
h&\longmapsto \sum_{i=1}^d\sigma_i(\cdot,z(\cdot))\langle e_i,h\rangle,\qquad z\in L^p(\mt^N),
\end{split}
\end{equation*}
where the functions $\sigma_1,\dots,\sigma_d$ satisfy the following linear growth condition
\begin{equation}\label{lrust}
\sum_{i=1}^d\big|\,\sigma_i(x,\xi)\big|^2\leq C\big(1+|\xi|^2),\qquad x\in\mt^N,\,\xi\in\mr.
\end{equation}

Since we are going to solve \eqref{semilin} in $L^p(\mt^N)$, for $p\in[2,\infty)$, we need to ensure the existence of the stochastic integral as an $L^p(\mt^N)$-valued process. Recall, that $L^p$ spaces, $p\in[2,\infty)$, as well as the Sobolev spaces $W^{k,p}$, $p\in[2,\infty),\,k\geq 0$, belong to a class of the so-called 2-smooth Banach spaces, which are well suited for stochastic It\^o integration. (A detailed construction of stochastic integral for processes with values in 2-smooth Banach spaces can be found in \cite{b2} or \cite{ondrejat3}.) Let us denote by $\gamma(\mathfrak{U};X)$ the space of all $\gamma$-radonifying operators from $\mathfrak{U}$ to a 2-smooth Banach space $X$.
We will show that $\sigma(z)\in\gamma(\mathfrak{U};L^p(\mt^N))$ for any $z\in L^p(\mt^N)$ and
$$\|\sigma(z)\|^2_{\gamma(\mathfrak{U};L^p(\mt^N))}\leq C\big(1+\|z\|^2_{L^p(\mt^N)}\big).$$
Note, that the following fact holds true:
\begin{equation}\label{radon}
\begin{split}
\forall s>0&\quad\exists C_s\in(0,\infty)\quad\forall \xi_1,\dots,\xi_n\text{ independent }\mathcal{N}(0,1)\text{-random variables}\\
&\forall x_1,\dots,x_n\in\mr\qquad \bigg(\stred\Big|\sum_{i=1}^nx_i\xi_i\Big|^s\bigg)^{\frac{1}{s}}=C_s\bigg(\sum_{i=1}^n x_i^2\bigg)^{\frac{1}{2}}.
\end{split}
\end{equation}
The proof is, by the way, easy: $\big(\sum_{i=1}^n x_i^2\big)^{-\frac{1}{2}}\sum_{i=1}^nx_i\xi_i$ is an $\mathcal{N}(0,1)$-random varia\-ble.
Let $\{\xi_i\}_{i=1}^d$ be a sequence of independent $\mathcal{N}(0,1)$-random varia\-bles, by the definition of a $\gamma$-radonifying norm, using \eqref{radon} and \eqref{lrust}
\begin{equation}\label{rr}
\begin{split}
\|\sigma(z)&\|^2_{\gamma(\mathfrak{U};L^p(\mt^N))}=\stred\Big\|\sum_{i=1}^d\xi_i\,\sigma(z)e_i\Big\|_{L^p(\mt^N)}^2=\stred\Big\|\sum_{i=1}^d\xi_i\,\sigma_i(\cdot,z(\cdot))\Big\|^2_{L^p(\mt^N)}\\
&\leq\bigg(\stred\Big\|\sum_{i=1}^d\xi_i\,\sigma_i(\cdot,z(\cdot))\Big\|^p_{L^p(\mt^N)}\bigg)^{\frac{2}{p}}=\bigg(\int_{\mt^N}\stred\Big|\sum_{i=1}^d\xi_i\,\sigma_i(y,z(y))\Big|^p\dif y\bigg)^{\frac{2}{p}}\\
&=C_p^2\bigg(\int_{\mt^N}\Big(\sum_{i=1}^d\big|\sigma_i(y,z(y))\big|^2\Big)^{\frac{p}{2}}\dif y\bigg)^{\frac{2}{p}}\leq C\bigg(\int_{\mt^N}\big(1+|z(y)|^2\big)^\frac{p}{2}\dif y\bigg)^\frac{2}{p}\\
&\leq C\big(1+\|z\|_{L^p(\mt^N)}^2\big)
\end{split}
\end{equation}
and the claim follows.

The main result of this paper is as follows:

\begin{thm}\label{smooth1}
Let $p\in[2,\infty),\,q\in(2,\infty),\,m\in\mn$. We suppose that
$$u_0\in L^q(\Omega;W^{m,p}(\mt^N))\cap L^{mq}(\Omega;W^{1,mp}(\mt^N))$$
and
$$f_\alpha\in C^{m}(\mr)\cap C^{2l-1}(\mr),\;\; |\alpha|\leq 2l-1;\;\quad\sigma_i\in C^m(\mt^N\times\mr),\;\;i=1,\dots,d,$$
have bounded derivatives up to order $m$.
Then there exists a solution to \eqref{semilin} which belongs to
$$L^q(\Omega;C([0,T];W^{m,p}(\mt^N)))$$
and the following estimate holds true
\begin{equation*}
\stred\sup_{0\leq t\leq T}\|u(t)\|_{W^{m,p}(\mt^N)}^q\leq C\big(1+\stred\|u_0\|_{W^{m,p}(\mt^N)}^q+\stred\|u_0\|_{W^{1,mp}(\mt^N)}^{mq}\big).
\end{equation*}

\end{thm}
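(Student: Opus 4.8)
The plan is to construct the solution through successive approximations, following the strategy outlined in the introduction: first obtain an $L^p$-valued mild solution via the Banach fixed point theorem, then bootstrap regularity by studying the Picard iterates in progressively higher Sobolev spaces, establishing uniform a priori estimates that survive the passage to the limit. The mild formulation replaces \eqref{semilin} by the integral equation
\begin{equation*}
u(t)=\mathcal{S}_p(t)u_0+\int_0^t\mathcal{S}_p(t-s)F(u(s))\,\dif s+\int_0^t\mathcal{S}_p(t-s)\sigma(u(s))\,\dif W(s).
\end{equation*}
The first key point is that although $F$ maps $L^p(\mt^N)$ only into $W^{-2l+1,p}(\mt^N)$, the smoothing estimate \eqref{pazy12} with $\delta=(2l-1)/(2l)<1$ makes $(-\mathcal{A}_p)^\delta\mathcal{S}_p(t-s)$ integrable in $s$ near $t$, so the deterministic convolution is well-defined in $L^p$; combined with the growth bound \eqref{rr} for the stochastic term and the factorization method (or Da Prato--Kwapie\'n--Zabczyk-type estimates) in $2$-smooth Banach spaces, a contraction on a short time interval yields a unique mild solution in $L^q(\Omega;C([0,T];L^p(\mt^N)))$, which is then extended to all of $[0,T]$ by the linear growth of the coefficients.

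Next I would lift the regularity. Here the Sobolev norm of $u$ is controlled \emph{not} by invoking a Lipschitz or linear-growth property of $F,\sigma$ in the high-order norm — which, as the introduction stresses and as Proposition \ref{admissible} and Corollary \ref{admissible1} make precise, genuinely fails — but by differentiating the superposition operators. The governing idea is that $\totdif^\alpha f_\alpha(u)$ and the derivatives of $\sigma_i(\cdot,u)$ expand, via the chain rule, into terms of the form (a bounded derivative of $f_\alpha$ or $\sigma_i$) times products of derivatives of $u$ of total order at most $m$. The top-order term is linear in $D^m u$, while the lower-order products are exactly where the nonlinear growth appears and must be absorbed using the mixed control coming from the second hypothesis $u_0\in L^{mq}(\Omega;W^{1,mp}(\mt^N))$: by Gagliardo--Nirenberg interpolation, a product of $k$ derivatives whose orders sum to $m$ is estimated in $L^p$ by the $W^{m,p}$-norm together with the $W^{1,mp}$-norm (equivalently $\|u\|_{W^{1,mp}}^m$), which is precisely the combination appearing on the right-hand side of the asserted estimate. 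This is the mechanism that turns the \textbf{a priori} nonlinear growth into something Gronwall can handle.

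Concretely, for each Picard iterate $u^{(n)}$ I would apply the smoothing estimate \eqref{pazy12} with an appropriate fractional power to move a derivative onto the semigroup, use the Burkholder--Davis--Gundy inequality (valid for martingales in $2$-smooth spaces) together with the $\gamma$-radonifying bound to control the stochastic convolution in $W^{m,p}$, and collect everything into an inequality of the form
\begin{equation*}
\stred\sup_{0\leq s\leq t}\|u^{(n)}(s)\|_{W^{m,p}}^q\leq C\big(1+\stred\|u_0\|_{W^{m,p}}^q+\stred\|u_0\|_{W^{1,mp}}^{mq}\big)+C\int_0^t\stred\sup_{0\leq r\leq s}\|u^{(n-1)}(r)\|_{W^{m,p}}^q\,\dif s,
\end{equation*}
where the first constant term has already absorbed the lower-order nonlinear products via interpolation. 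A Gronwall argument then gives a bound uniform in $n$, hence the desired estimate for the limit. I expect the \textbf{main obstacle} to be exactly this interpolation/absorption step: one must verify that every product generated by the chain rule in $W^{m,p}$ can be dominated by the two quantities appearing in the hypotheses on $u_0$, which requires the separate preparatory results on superposition operators (Proposition \ref{admissible}, Corollary \ref{admissible1}, Remark \ref{first}) and careful bookkeeping of the Gagliardo--Nirenberg exponents — together with the accompanying verification that the time singularity from \eqref{pazy12} stays integrable at each regularity level, and that the stochastic term in $W^{m,p}$ indeed lands in the relevant $\gamma$-radonifying space.
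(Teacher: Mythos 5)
Your proposal follows essentially the same route as the paper: a Banach fixed point argument in $L^p(\mt^N)$ for the mild solution, then uniform bounds on the Picard iterates in $W^{1,p}(\mt^N)$ and $W^{m,p}(\mt^N)$ obtained by moving fractional powers of $-\mathcal{A}_p$ onto the semigroup, the factorization/BDG estimates for the stochastic convolution, and the superposition estimates (Proposition \ref{admissible}, Corollary \ref{admissible1}) that absorb the chain-rule products into $\|u\|_{W^{1,mp}}^m+\|u\|_{W^{m,p}}$ --- which is exactly why the hypothesis $u_0\in L^{mq}(\Omega;W^{1,mp}(\mt^N))$ appears. The only cosmetic difference is that you close the recursion in $n$ by a Gronwall-type iteration, whereas the paper takes $T$ small so that the recursion $K_n\leq A+C_T(1+K_{n-1})$ with $C_T<1$ is uniformly bounded and then removes the smallness by partitioning $[0,T]$; both devices work.
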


\begin{cor}\label{smooth2}
Let $k\in\mn_0,\,q\in(2,\infty)$ and $u_0\in L^q(\Omega;C^{k+1}(\mt^N))$. Assume that
$$f_\alpha\in C^{k+1}(\mr)\cap C^{2l-1}(\mr),\;|\alpha|\leq 2l-1;\quad\;\sigma_i\in C^{k+1}(\mt^N\times\mr),\;i=1,\dots,d,$$
have bounded derivatives up to order $k+1.$
Then there exists a solution to \eqref{semilin} which belongs to
$$L^q(\Omega;C([0,T];C^{k,\lambda}(\mt^N))),\qquad \lambda\in(0,1).$$

\end{cor}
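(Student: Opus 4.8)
The plan is to deduce the corollary directly from Theorem \ref{smooth1} combined with the Sobolev embedding theorem, so that the real work reduces to a careful choice of the parameters $m$ and $p$ and to checking that the hypotheses of the theorem are met by the given data.

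First I would fix $\lambda\in(0,1)$ and set $m=k+1$. This is essentially the only admissible choice: to reach a space of $C^{k,\cdot}$ regularity through Sobolev embedding one needs the order of differentiability $m-1\geq k$, while the smoothness assumed on the coefficients, namely $f_\alpha\in C^{k+1}$ and $\sigma_i\in C^{k+1}$, only allows Theorem \ref{smooth1} to be applied with $m\leq k+1$. Next I would choose $p\in(2,\infty)$ large enough that $p>N$ and $1-N/p>\lambda$, i.e. $p>N/(1-\lambda)$; for such $p$ the Morrey-type embedding yields
\[
W^{m,p}(\mt^N)=W^{k+1,p}(\mt^N)\hookrightarrow C^{k,\,1-N/p}(\mt^N)\hookrightarrow C^{k,\lambda}(\mt^N),
\]
where the final inclusion uses $C^{k,\mu}\hookrightarrow C^{k,\lambda}$ for $\mu>\lambda$ on the compact manifold $\mt^N$.

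Before invoking the theorem I would verify the conditions on the initial datum. Since $\mt^N$ has finite measure, one has the pointwise-in-$\omega$ chain $C^{k+1}(\mt^N)\hookrightarrow W^{m,p}(\mt^N)\cap W^{1,mp}(\mt^N)$, with both Sobolev norms controlled by $\|\cdot\|_{C^{k+1}(\mt^N)}$. Hence $u_0\in L^q(\Omega;C^{k+1})$ provides the spatial regularity required by Theorem \ref{smooth1}, and that theorem then produces a solution $u$ to \eqref{semilin} lying in $L^Q(\Omega;C([0,T];W^{m,p}(\mt^N)))$ for the integrability order $Q$ at which it is applied. Transporting this through the embedding of the previous step is then routine: composing the continuous $W^{m,p}$-valued paths of $u$ with the bounded linear map $W^{m,p}\hookrightarrow C^{k,\lambda}$ gives continuous $C^{k,\lambda}$-valued paths, and continuity of the embedding carries over the $\Omega$-integrability, yielding $u\in L^Q(\Omega;C([0,T];C^{k,\lambda}(\mt^N)))$.

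The main obstacle is the moment bookkeeping for the initial condition. Theorem \ref{smooth1} demands $u_0$ simultaneously in $L^{Q}(\Omega;W^{m,p})$ and in $L^{mQ}(\Omega;W^{1,mp})$, the latter requiring the $mQ$-th moment of $\|u_0\|_{C^{k+1}}$, whereas the hypothesis supplies only the $q$-th moment. Reconciling these is the delicate point: one must either apply the theorem at the reduced order $Q=q/m$, obtaining the conclusion with that integrability, or read the assumption on $u_0$ as furnishing the higher moment $L^{mq}(\Omega;C^{k+1})$ needed to take $Q=q$ and recover the stated $L^q(\Omega;C([0,T];C^{k,\lambda}(\mt^N)))$. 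Everything else in the argument is a direct application of the embedding theorem and of Theorem \ref{smooth1}.
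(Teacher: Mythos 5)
Your argument is the same as the paper's: set $m=k+1$, take $p>N/(1-\lambda)$, and push the $W^{k+1,p}$-valued solution produced by Theorem \ref{smooth1} through the Sobolev embedding $W^{k+1,p}(\mt^N)\hookrightarrow C^{k,\lambda}(\mt^N)$. The moment mismatch you flag at the end is real and is not addressed in the paper either: for $k\geq 1$ Theorem \ref{smooth1} requires $u_0\in L^{mq}(\Omega;W^{1,mp}(\mt^N))$, which does not follow from $u_0\in L^{q}(\Omega;C^{k+1}(\mt^N))$ alone, and the paper's proof of the corollary simply invokes the theorem without comment. Either of your two repairs (applying the theorem at the reduced exponent $q/m$, provided $q>2m$, or strengthening the hypothesis to $u_0\in L^{mq}(\Omega;C^{k+1}(\mt^N))$) closes this, so your write-up is if anything more careful than the original.
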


\section{Preliminaries}

For the reader's convenience we shall first restate the following auxiliary result which is taken from \cite[Theorem 5.2.5]{runst}.

\begin{prop}\label{admissible}
Let $m\in\mn,\,m\geq 2,\,p\in[1,\infty)$. Suppose that the function $G\in C^m(\mr)$ has bounded derivatives up to order $m$. If $f\in W^{m,p}(\mt^N)\cap W^{1,mp}(\mt^N)$ then the following estimate holds true
\begin{equation*}
\big\|G(f)\big\|_{W^{m,p}(\mt^N)}\leq C\big(1+\|f\|_{W^{1,mp}(\mt^N)}^m+\|f\|_{W^{m,p}(\mt^N)}\big)
\end{equation*}
with a constant independent of $f$.

\begin{proof}
Since $G$ has a linear growth we have
$$\|G(f)\|_{L^p(\mt^N)}\leq C\big(1+\|f\|_{L^p(\mt^N)}\big).$$
Next, we will employ the chain rule formula for partial derivatives of compositions:
\begin{equation*}\label{chain}
\begin{split}
\totdif^\gamma G(f(x))=\sum_{l=1}^{|\gamma|}\sum_{\substack{\alpha_1+\cdots+\alpha_l=\gamma\\|\alpha_i|\neq 0}}\!C_{\gamma,l,\alpha_1,\dots,\alpha_l}\,G^{(l)}(f(x))\,\totdif^{\alpha_1}f(x)\cdots\totdif^{\alpha_l}f(x),
\end{split}
\end{equation*}
where $\gamma=(\gamma_1,\dots,\gamma_N),\,\alpha_i=(\alpha_i^1,\dots,\alpha_i^N),\,i=1,\dots,l,$ are multiindices and $C_{\gamma,l,\alpha_1,\dots,\alpha_l}$ are certain combinatorial constants. It is sufficient to consider $|\gamma|=m$. By the H\"{o}lder inequality we obtain
\begin{equation*}
\begin{split}
\big\|G^{(l)}(f)\,\totdif^{\alpha_1}f\cdots\totdif^{\alpha_l}f\big\|_{L^p(\mt^N)}\leq \big\|G^{(l)}\big\|_{L^\infty(\mr)}\prod_{i=1}^l\big\|\totdif^{\alpha_i}f\big\|_{L^{\frac{mp}{|\alpha_i|}}(\mt^N)}.
\end{split}
\end{equation*}
Due to interpolation inequalities, we have
\begin{equation*}
\|f\|_{W^{|\alpha_i|,\frac{mp}{|\alpha_i|}}(\mt^N)}\leq C\|f\|_{W^{1,mp}(\mt^N)}^{1-\theta_i}\|f\|_{W^{m,p}(\mt^N)}^{\theta_i}\qquad\text{ with }\qquad\theta_i=\frac{|\alpha_i|-1}{m-1}.
\end{equation*}
Therefore
\begin{equation*}
\begin{split}
\big\|\totdif^\gamma G(f)\big\|_{L^p(\mt^N)}&\leq C\max_{1\leq l\leq m}\sum_{\substack{\alpha_1+\cdots+\alpha_l=\gamma\\|\alpha_i|\neq 0}}\,\prod_{i=1}^l\|f\|_{W^{1,mp}(\mt^N)}^{1-\theta_i}\,\|f\|_{W^{m,p}(\mt^N)}^{\theta_i}\\
&\leq C\max_{1\leq l\leq m}\|f\|_{W^{1,mp}(\mt^N)}^{l-\frac{m-l}{m-1}}\|f\|_{W^{m,p}(\mt^N)}^{\frac{m-l}{m-1}}\\
&\leq C\big(\|f\|_{W^{1,mp}(\mt^N)}^m+\|f\|_{W^{m,p}(\mt^N)}\big),
\end{split}
\end{equation*}
where we used the fact that $a^x(b/a)^\frac{m-x}{m-1}$ is monotone in $x$ so the maximal value is attained at $x=1$ or $x=m$. The proof is complete.
\end{proof}
\end{prop}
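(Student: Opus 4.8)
The plan is to estimate $\totdif^\gamma G(f)$ derivative-by-derivative and recombine. Since on $\mt^N$ the norm $\|G(f)\|_{W^{m,p}(\mt^N)}$ is equivalent to $\|G(f)\|_{L^p(\mt^N)}$ together with the top-order quantities $\|\totdif^\gamma G(f)\|_{L^p(\mt^N)}$ for $|\gamma|=m$, it suffices to control these. The zeroth-order term is immediate: boundedness of $G'$ forces at most linear growth of $G$, so $\|G(f)\|_{L^p(\mt^N)}\le C\big(1+\|f\|_{L^p(\mt^N)}\big)\le C\big(1+\|f\|_{W^{m,p}(\mt^N)}\big)$.

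For the top-order terms I would expand $\totdif^\gamma G(f)$ by the Fa\`a di Bruno (chain rule) formula, writing it as a finite sum over partitions $\alpha_1+\cdots+\alpha_l=\gamma$ with $|\alpha_i|\ge 1$ of terms $G^{(l)}(f)\,\totdif^{\alpha_1}f\cdots\totdif^{\alpha_l}f$. Because all derivatives of $G$ up to order $m$ are bounded, each factor $G^{(l)}(f)$ contributes only a constant. To the product $\totdif^{\alpha_1}f\cdots\totdif^{\alpha_l}f$ I would apply H\"older's inequality with exponents $p_i=mp/|\alpha_i|$, which are admissible precisely because $\sum_i|\alpha_i|=|\gamma|=m$ forces $\sum_i 1/p_i=1/p$. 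This reduces matters to bounding each $\|\totdif^{\alpha_i}f\|_{L^{mp/|\alpha_i|}(\mt^N)}$, i.e. (up to equivalence) $\|f\|_{W^{|\alpha_i|,\,mp/|\alpha_i|}(\mt^N)}$.

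The heart of the argument is a Gagliardo--Nirenberg interpolation: in the $(s,1/q)$ plane the point corresponding to $W^{|\alpha_i|,\,mp/|\alpha_i|}$ lies on the segment joining $W^{1,mp}$ and $W^{m,p}$, and solving the scaling relations $s=1+\theta(m-1)$, $1/q=s/(mp)$ gives the weight $\theta_i=(|\alpha_i|-1)/(m-1)$, whence $\|f\|_{W^{|\alpha_i|,\,mp/|\alpha_i|}(\mt^N)}\le C\|f\|_{W^{1,mp}(\mt^N)}^{1-\theta_i}\|f\|_{W^{m,p}(\mt^N)}^{\theta_i}$. Multiplying over $i$ and using $\sum_i|\alpha_i|=m$, the exponents telescope: the total power of $\|f\|_{W^{m,p}(\mt^N)}$ is $\sum_i\theta_i=(m-l)/(m-1)$ and that of $\|f\|_{W^{1,mp}(\mt^N)}$ is $l-(m-l)/(m-1)$, so each term of the expansion is controlled by $\|f\|_{W^{1,mp}(\mt^N)}^{\,l-(m-l)/(m-1)}\|f\|_{W^{m,p}(\mt^N)}^{\,(m-l)/(m-1)}$.

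The final and genuinely delicate step is to eliminate the dependence on the partition length $l\in\{1,\dots,m\}$. Writing $a=\|f\|_{W^{1,mp}(\mt^N)}$ and $b=\|f\|_{W^{m,p}(\mt^N)}$, each term equals $a^{\,l}(b/a)^{(m-l)/(m-1)}$, whose logarithm is affine in $l$; hence the expression is monotone in $l$ and attains its maximum over the range at an endpoint, namely $l=1$ giving $b$, or $l=m$ giving $a^m$. Thus every term is dominated by $a^m+b$, and summing over the finitely many partitions and multiindices yields the claimed bound with a constant independent of $f$. I expect this endpoint/monotonicity bookkeeping, rather than any single analytic inequality, to be the main obstacle, since one must check that no intermediate value of $l$ produces a power worse than the pure $\|f\|_{W^{1,mp}(\mt^N)}^m$ and $\|f\|_{W^{m,p}(\mt^N)}$ contributions.
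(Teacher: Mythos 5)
Your proposal is correct and follows essentially the same route as the paper's own proof: linear growth for the $L^p$ term, the Fa\`a di Bruno expansion, H\"older with exponents $mp/|\alpha_i|$, Gagliardo--Nirenberg interpolation with $\theta_i=(|\alpha_i|-1)/(m-1)$, and the observation that the resulting exponent in $l$ is log-affine so the maximum sits at $l=1$ or $l=m$. The only cosmetic difference is that you justify reducing to $|\gamma|=m$ explicitly via the equivalence of the $W^{m,p}$ norm with the $L^p$ norm plus top-order seminorms, which the paper leaves implicit.
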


This result can be easily extended to more general outer function.

\begin{cor}\label{admissible1}
Let $m\in\mn,\,m\geq 2,\,p\in[1,\infty)$. Suppose that the function $G\in C^m(\mt^N\times\mr)$ has the linear growth
\begin{equation}\label{xx1}
|G(x,\xi)|\leq C(1+|\xi|),\qquad x\in\mt^N,\,\xi\in\mr,
\end{equation}
and bounded derivatives up to order $m$. If $f\in W^{m,p}(\mt^N)\cap W^{1,mp}(\mt^N)$ then the following estimate holds true
\begin{equation*}
\big\|G(\cdot,f(\cdot))\big\|_{W^{m,p}(\mt^N)}\leq C\big(1+\|f\|_{W^{1,mp}(\mt^N)}^m+\|f\|_{W^{m,p}(\mt^N)}\big)
\end{equation*}
with a constant independent of $f$.
\end{cor}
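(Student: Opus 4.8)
The plan is to follow the proof of Proposition~\ref{admissible} closely, the only genuinely new feature being the explicit dependence of $G$ on the space variable $x$. First I would dispose of the zeroth-order term: the linear growth \eqref{xx1} gives at once
$$\big\|G(\cdot,f(\cdot))\big\|_{L^p(\mt^N)}\leq C\big(1+\|f\|_{L^p(\mt^N)}\big)\leq C\big(1+\|f\|_{W^{m,p}(\mt^N)}\big),$$
so that it remains to control $\totdif^\gamma\big[G(\cdot,f(\cdot))\big]$ in $L^p(\mt^N)$ for every multiindex $\gamma$ with $|\gamma|=m$.

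To that end I would apply the generalised Fa\`a di Bruno formula to $H(x):=G(x,f(x))$, which writes $\totdif^\gamma H$ as a finite linear combination of terms of the form
$$\big(\partial_x^{\beta}\partial_\xi^{l}G\big)(x,f(x))\,\totdif^{\alpha_1}f(x)\cdots\totdif^{\alpha_l}f(x),\qquad \beta+\alpha_1+\cdots+\alpha_l=\gamma,\;\;|\alpha_i|\neq0,$$
indexed by the number $l\geq0$ of factors falling on the inner function $f$ and by the multiindex $\beta$ recording those derivatives that hit the explicit $x$-slot. The decisive structural point is that every such term satisfies $|\beta|+l\leq|\beta|+\sum_i|\alpha_i|=m$, so the coefficient $\partial_x^{\beta}\partial_\xi^{l}G$ is a derivative of $G$ of order at most $m$ and is therefore bounded uniformly on $\mt^N\times\mr$; this lets me pull it out of the $L^p$ norm at the cost of a constant.

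For the terms with $l\geq1$ I would then argue exactly as in Proposition~\ref{admissible}: H\"older's inequality with the exponents $\tfrac{mp}{|\alpha_i|}$ followed by the interpolation inequality bounds the remaining product by $\|f\|_{W^{1,mp}(\mt^N)}^{\sum_i(1-\theta_i)}\|f\|_{W^{m,p}(\mt^N)}^{\sum_i\theta_i}$ with $\theta_i=\tfrac{|\alpha_i|-1}{m-1}$. The single term with $l=0$ is $\big(\partial_x^\gamma G\big)(\cdot,f(\cdot))$, a bounded derivative of order $m\geq1$, whose $L^p$ norm over the finite-measure torus is a constant.

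The one place where the estimate genuinely departs from Proposition~\ref{admissible}, and the step I expect to need the most care, is the exponent bookkeeping when $|\beta|>0$. Then $\sum_i|\alpha_i|=m-|\beta|<m$, so the H\"older exponents $\tfrac{mp}{|\alpha_i|}$ now sum to $\tfrac{m-|\beta|}{mp}<\tfrac1p$; I would absorb the deficit through the embedding $L^r(\mt^N)\hookrightarrow L^p(\mt^N)$, which holds for $r\geq p$ on the finite-measure torus and costs only a constant, thereby keeping the same interpolation exponents. It then remains to check that, with $a:=\|f\|_{W^{1,mp}(\mt^N)}$ and $b:=\|f\|_{W^{m,p}(\mt^N)}$, the resulting quantity $a^{l-\frac{(m-|\beta|)-l}{m-1}}b^{\frac{(m-|\beta|)-l}{m-1}}$ is still dominated by $1+a^m+b$. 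Since both exponents are affine in $l$, the expression is log-affine in $l$ and attains its maximum over $1\leq l\leq m-|\beta|$ at an endpoint: at $l=m-|\beta|$ it equals $a^{m-|\beta|}\leq 1+a^m$, whereas at $l=1$ the two exponents sum to $1$ and Young's inequality yields a bound by $a+b$. Summing the finitely many terms then gives the asserted estimate.
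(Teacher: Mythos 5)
Your proposal is correct and is precisely the extension of the proof of Proposition~\ref{admissible} that the paper has in mind (the paper itself only remarks that the result "can be easily extended" and gives no details). You correctly identify and handle the only new point — the derivatives $\partial_x^\beta$ falling on the explicit $x$-slot, the resulting Hölder-exponent deficit absorbed by $L^{mp/(m-|\beta|)}(\mt^N)\hookrightarrow L^p(\mt^N)$, and the endpoint check of the log-affine exponent bound over $1\leq l\leq m-|\beta|$.
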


\begin{rem}\label{first}
The situation is much easier for the first order derivatives: fix $p\in[1,\infty)$ and let $f\in W^{1,p}(\mt^N)$
\begin{enumerate}
\item if $G\in C^1(\mr)$ with bounded derivative then
\begin{equation*}\label{xs1}
\big\|G(f)\|_{W^{1,p}(\mt^N)}\leq C\big(1+\|f\|_{W^{1,p}(\mt^N)}\big),
\end{equation*}
\item if $G\in C^1(\mt^N\times\mr)$ has the linear growth \eqref{xx1} and bounded derivative then
\begin{equation*}\label{xs2}
\big\|G(\cdot,f(\cdot))\|_{W^{1,p}(\mt^N)}\leq C\big(1+\|f\|_{W^{1,p}(\mt^N)}\big),
\end{equation*}
\end{enumerate}
where the constant $C$ is independent of $f$.
\end{rem}

\section{Proof of the main result}

Let us review the main ideas of the proof. The proof is divided into three steps. In the first step, we apply the Banach fixed point theorem to conclude the existence of an $L^p(\mt^N)$-valued mild solution of \eqref{semilin}. In the second step, we study Picard iterations of \eqref{semilin} and find a uniform estimate of the $W^{1,p}(\mt^N)$-norm. It is then used in the third step to derive a uniform estimate of the $W^{m,p}(\mt^N)$-norm. This estimate remains valid also for the limit process and the statement follows.

These steps will be formulated in the form of propositions.

\begin{prop}[Fixed point argument]\label{prop1}
Let $p,\,q\in[2,\infty)$. Assume that $u_0\in L^q(\Omega;L^p(\mt^N))$ and
$$f_\alpha\in C^{2l-1}(\mr),\;\; |\alpha|\leq 2l-1;\qquad\qquad\sigma_i\in C^1(\mt^N\times\mr),\;\;i=1,\dots,d,$$
have bounded derivatives of first order.
Then there exists a unique mild solution to \eqref{semilin} which belongs to
$$L^q(\Omega\times[0,T],\mathcal{P},\dif\prst\otimes\dif t;L^p(\mt^N)).$$

\begin{proof}

Let us denote
$$\mathscr{H}=L^q(\Omega\times[0,T],\mathcal{P},\dif\prst\otimes\dif t;L^p(\mt^N))$$
and define the mapping
\begin{equation*}
\begin{split}
\big(\mathscr{K}v\big)(t)&=\mathcal{S}_p(t)u_0+\int_0^t \mathcal{S}_p(t-s)F(v(s))\,\dif s+\int_0^t \mathcal{S}_p(t-s)\sigma(v(s))\,\dif W(s)\\
&=\mathcal{S}_p(t)u_0+\big(\mathscr{K}_1v\big)(t)+\big(\mathscr{K}_2v\big)(t),\qquad\quad t\in[0,T],\;v\in\mathscr{H}.
\end{split}
\end{equation*}
Here, we employ stochastic integration in $L^p(\mt^N)$ as introduced in Section \ref{notation}. 
We shall prove that $\mathscr{K}$ maps $\mathscr{H}$ into $\mathscr{H}$ and that it is a contraction.

Since $u_0\in L^q(\Omega;L^p(\mt^N))$ it follows easily that $\mathcal{S}(t)u_0\in\mathscr{H}$. 
In order to estimate the second term, let $\delta=\frac{2l-1}{2l}$ and note that
\begin{equation*}
\begin{split}
\mathcal{S}_p(t-s)F(v(s))&=\mathcal{S}_p(t-s)(-\mathcal{A}_p)^\delta(-\mathcal{A}_p)^{-\delta}\sum_{\substack{|\alpha|\leq 2l-1\\a_\alpha\neq 0}}a_\alpha\totdif^\alpha f_\alpha(v(s)),
\end{split}
\end{equation*}
where the operator $(-\mathcal{A}_p)^\delta$ commutes with the semigroup and the operator
\begin{equation*}
\begin{split}
\mathcal{B}_p\;:\;L^p(\mt^N;\mr^\gamma)&\longrightarrow L^p(\mt^N)\\
\{z_\alpha\}_{\alpha=1}^\gamma&\longmapsto(-\mathcal{A}_p)^{-\delta}\sum_{\substack{|\alpha|\leq 2l-1\\a_\alpha\neq 0}}a_\alpha\totdif^\alpha z_\alpha
\end{split}
\end{equation*}
is bounded. Indeed, the operators $L^r(\mt^N)\rightarrow L^r(\mt^N),\;v\mapsto a_\alpha\totdif^\alpha(-\mathcal{A}_r)^{-\delta} v,$ $|\alpha|\leq 2l-1,$ are clearly bounded.
If $p^*$ is the conjugate exponent to $p$ and $z\in L^p(\mt^N;\mr^\gamma)$ then
\begin{equation*}
\begin{split}
\bigg\|&(-\mathcal{A}_p)^{-\delta}\sum_{\substack{|\alpha|\leq 2l-1\\a_\alpha\neq 0}}a_\alpha\totdif^\alpha z_\alpha\bigg\|_{L^p(\mt^N)}\\
&\quad=\sup_{\substack{v\in L^{p^*}(\mt^N)\\ \|v\|_{L^{p^*}(\mt^N)}\leq 1}}\Bigg|\int_{\mt^N}(-\mathcal{A}_p)^{-\delta}\sum_{\substack{|\alpha|\leq 2l-1\\a_\alpha\neq 0}}a_\alpha\totdif^\alpha z_\alpha(x)\, v(x)\,\dif x\Bigg|\\
&\quad=\sup_{\substack{v\in L^{p^*}(\mt^N)\\ \|v\|_{L^{p^*}(\mt^N)}\leq 1}}\Bigg|\sum_{\substack{|\alpha|\leq 2l-1\\a_\alpha\neq 0}}\int_{\mt^N}z_\alpha(x)\,a_\alpha\totdif^\alpha(-\mathcal{A}_{p^*})^{-\delta} v(x)\,\dif x\Bigg|\\
&\quad=\sup_{\substack{v\in L^{p^*}(\mt^N)\\ \|v\|_{L^{p^*}(\mt^N)}\leq 1}}\Bigg|\int_{\mt^N}\bigg\langle z(x),\Big\{a_\alpha\totdif^\alpha(-\mathcal{A}_{p^*})^{-\delta} v(x)\Big\}_{\substack{|\alpha|\leq 2l-1\\a_\alpha\neq 0}}\bigg\rangle_{\mr^\gamma}\,\dif x\Bigg|\\
&\quad\leq \|z\|_{L^p(\mt^N;\mr^\gamma)}\sup_{\substack{v\in L^{p^*}(\mt^N)\\ \|v\|_{L^{p^*}(\mt^N)}\leq 1}}\Big\|\Big\{a_\alpha\totdif^\alpha(-\mathcal{A}_{p^*})^{-\delta} v\Big\}_{\substack{|\alpha|\leq 2l-1\\a_\alpha\neq 0}}\Big\|_{L^{p^*}(\mt^N;\mr^\gamma)}\\
&\quad\leq C\,\|z\|_{L^p(\mt^N;\mr^\gamma)}
\end{split}
\end{equation*}
and the claim follows.
Next, all $f_\alpha,\,|\alpha|\leq 2l-1,$ have bounded derivatives hence at most linear growth, so it holds for any $z\in L^p(\mt^N)$
\begin{equation}\label{mnmn}
\big\|f(z)\big\|_{L^p(\mt^N)}\leq C\big(1+\|z\|_{L^p(\mt^N)}\big).
\end{equation}
Indeed, using $p$-norm as an equivalent norm on Euclidean space $\mr^\gamma$
\begin{equation*}
\begin{split}
\big\|f(z)\big\|_{L^p(\mt^N)}^p&=\sum_{\substack{|\alpha|\leq 2l-1\\a_\alpha\neq 0}}\int_{\mt^N}|f_\alpha(z(x))|^p\dif x\leq\sum_{\substack{|\alpha|\leq 2l-1\\a_\alpha\neq 0}}C_\alpha\int_{\mt^N}\big(1+|z(x)|^p\big)\,\dif x\\
&\leq C\big(1+\|z\|_{L^p(\mt^N)}^p\big).
\end{split}
\end{equation*}
If $v\in\mathscr{H}$, then using the above remark, the fact \eqref{pazy12},
the estimate \eqref{mnmn} and the Young inequality for convolutions we obtain
\begin{equation}\label{as}
\begin{split}
\big\|&\mathscr{K}_1v\big\|_{\mathscr{H}}^q=\stred\int_0^T\bigg\|\int_0^t\mathcal{S}_p(t-s) F(v(s))\,\dif s\big\|_{L^p(\mt^N)}^q\dif t\\
&\quad\leq \stred\int_0^T\bigg(\int_0^t\bigg\|(-\mathcal{A}_p)^\delta\mathcal{S}_p(t-s)\mathcal{B}_pf(v(s))\big\|_{L^p(\mt^N)}\dif s\bigg)^q\dif t\\
&\quad\leq C\,\stred\int_0^T\bigg(\int_0^t\frac{1}{(t-s)^\delta}\big\|\mathcal{B}_pf(v(s))\big\|_{L^p(\mt^N)}\dif s\bigg)^q\dif t\\
&\quad\leq C\,\stred\int_0^T\bigg(\int_0^t\frac{1}{(t-s)^\delta}\big\|f(v(s))\big\|_{L^p(\mt^N)}\dif s\bigg)^q\dif t\\
&\quad\leq C\,\stred\int_0^T\bigg(\int_0^t\frac{1}{(t-s)^\delta}\big(1+\|v(s)\|_{L^p(\mt^N)}\big)\dif s\bigg)^q\dif t\\
&\quad\leq C\, T^{q(1-\delta)}\,\stred\int_0^T\big(1+\|v(s)\|_{L^p(\mt^N)}\big)^q\dif s=C\, T^{q(1-\delta)}\,\big(T+\|v\|^q_{\mathscr{H}}\big).
\end{split}
\end{equation}
Next, by the Burk\-holder-Davis-Gundy inequality for martingales with values in 2-smooth Banach spaces (see \cite{b1}, \cite{ondrejat3}), we have
\begin{equation}\label{xxx}
\begin{split}
\big\|\mathscr{K}_2v\big\|_{\mathscr{H}}^q&=\stred\int_0^T\bigg\|\int_0^t\mathcal{S}_p(t-s)\sigma(v(s))\dif W(s)\bigg\|_{L^p(\mt^N)}^q\dif t\\
&\leq C\int_0^T\stred\bigg(\int_0^t\big\|\mathcal{S}_p(t-s)\sigma(v(s))\big\|_{\gamma(\mathfrak{U};L^p(\mt^N))}^2\dif s\bigg)^\frac{q}{2}\,\dif t\\
&\leq C\,T^\frac{q-2}{2}\int_0^T\stred\int_0^t\big\|\sigma(v(s))\big\|^q_{\gamma(\mathfrak{U};L^p(\mt^N))}\dif s\,\dif t.
\end{split}
\end{equation}
The $\gamma$-radonifying norm can be computed using \eqref{radon} similarly as in \eqref{rr}. Let $\{\xi_i\}_{i=1}^d$ be a sequence of independent $\mathcal{N}(0,1)$-random variables
\begin{equation*}
\begin{split}
\big\|&\sigma(v(s))\big\|^q_{\gamma(\mathfrak{U};L^p(\mt^N))}=\bigg(\stred\Big\|\sum_{i=1}^d\xi_i\,\sigma_i(\cdot,v(s,\cdot))\Big\|_{L^p(\mt^N)}^2\bigg)^\frac{q}{2}\\
&\leq\bigg(\stred\Big\|\sum_{i=1}^d\xi_i\,\sigma_i(\cdot,v(s,\cdot))\Big\|_{L^p(\mt^N)}^p\bigg)^\frac{q}{p}\leq\bigg(\int_{\mt^N}\stred\Big|\sum_{i=1}^d\xi_i\,\sigma_i(y,v(s,y))\Big|^p\dif y\bigg)^\frac{q}{p}\\
&= C\bigg(\int_{\mt^N}\Big(\sum_{i=1}^d\big|\sigma_i(y,v(s,y))\big|^2\Big)^\frac{p}{2}\dif y\bigg)^\frac{q}{p}\leq C\bigg(\int_{\mt^N}\big(1+|v(s,y)|^p\big)\dif y\bigg)^\frac{q}{p}\\
&\leq C\big(1+\|v(s)\|_{L^p(\mt^N)}^q\big).
\end{split}
\end{equation*}
Therefore
\begin{equation*}
\begin{split}
\big\|\mathscr{K}_2v\big\|_{\mathscr{H}}^q&\leq C\,T^\frac{q-2}{2}\int_0^T\stred\int_0^t\big(1+\|v(s)\|_{L^p(\mt^N)}^q\big)\dif s\,\dif t\leq C\,T^\frac{q}{2}\big(T+\|v\|_{\mathscr{H}}^q\big).
\end{split}
\end{equation*}
We conclude that $\mathscr{K}(\mathscr{H})\subset\mathscr{H}$ for any $T>0$.

In order to show the contraction property of $\mathscr{K}$, we will follow the approach from \eqref{as} and use the Lipschitz continuity of $f$. Indeed, $f_\alpha,\,|\alpha|\leq l,$ have bounded derivatives so they are Lipschitz continuous so
\begin{equation*}
\begin{split}
\big\|f(z_1)-f(z_2)\|_{L^p(\mt^N)}\leq C\,\|z_1-z_2\|_{L^p(\mt^N)},\qquad z_1,\,z_2\in L^p(\mt^N),
\end{split}
\end{equation*}
can be proved as \eqref{mnmn}.
For  $v,w\in\mathscr{H}$
\begin{equation*}
\begin{split}
\big\|&\mathscr{K}_1v-\mathscr{K}_1w\big\|_{\mathscr{H}}^q=\stred\int_0^T\bigg\|\int_0^t\mathcal{S}_p(t-s) \Big(F(v(s))-F(w(s))\Big)\,\dif s\bigg\|_{L^p(\mt^N)}^q\dif t\\
&\quad\leq \stred\int_0^T\bigg(\int_0^t\big\|(-\mathcal{A}_p)^\delta\mathcal{S}_p(t-s)\mathcal{B}_p\big(f(v(s))-f(w(s))\big)\big\|_{L^p(\mt^N)}\dif s\bigg)^q\dif t\\
&\quad\leq C\,\stred\int_0^T\bigg(\int_0^t\frac{1}{(t-s)^\delta}\big\|\mathcal{B}_p\big(f(v(s))-f(w(s)\big)\big\|_{L^p(\mt^N)}\dif s\bigg)^q\dif t\\
&\quad\leq C\,\stred\int_0^T\bigg(\int_0^t\frac{1}{(t-s)^\delta}\big\|f(v(s))-f(w(s))\big\|_{L^p(\mt^N)}\dif s\bigg)^q\dif t\\
&\quad\leq C\,\stred\int_0^T\bigg(\int_0^t\frac{1}{(t-s)^\delta}\|v(s)-w(s)\|_{L^p(\mt^N)}\dif s\bigg)^q\dif t\\
&\quad\leq C\, T^{q(1-\delta)}\,\stred\int_0^T\|v(s)-w(s)\|_{L^p(\mt^N)}^q\dif s=C\, T^{q(1-\delta)}\|v-w\|_{\mathscr{H}}^q.
\end{split}
\end{equation*}
In the case of $\mathscr{K}_2$ we employ the same calculations as in \eqref{xxx} and the sequel.
\begin{equation*}
\begin{split}
\big\|&\mathscr{K}_2v-\mathscr{K}_2w\big\|_{\mathscr{H}}^q=\stred\int_0^T\bigg\|\int_0^t\mathcal{S}_p(t-s)\Big(\sigma(v(s))-\sigma(w(s))\Big)\dif W(s)\bigg\|_{L^p(\mt^N)}^q\!\!\dif t\\
&\qquad\leq C\int_0^T\stred\bigg(\int_0^t\big\|\mathcal{S}_p(t-s)\big(\sigma(v(s))-\sigma(w(s))\big)\big\|_{\gamma(\mathfrak{U};L^p(\mt^N))}^2\dif s\bigg)^\frac{q}{2}\dif t\\
&\qquad\leq C\,T^\frac{q-2}{2}\int_0^T\stred\int_0^t\big\|\sigma(v(s))-\sigma(w(s))\big\|^q_{\gamma(\mathfrak{U};L^p(\mt^N))}\dif s\,\dif t
\end{split}
\end{equation*}
For the $\gamma$-radonifying norm we have
\begin{equation*}
\begin{split}
\big\|&\sigma(v(s))-\sigma(w(s))\big\|_{\gamma(\mathfrak{U};L^p(\mt^N)}^q\\
&\qquad\leq \bigg(\stred\Big\|\sum_{i=1}^d\xi_i\big(\sigma_i(\cdot,v(s,\cdot))-\sigma_i(\cdot,w(s,\cdot))\big)\Big\|_{L^p(\mt^N)}^2\bigg)^\frac{q}{2}\\
&\qquad\leq \bigg(\stred\Big\|\sum_{i=1}^d\xi_i\big(\sigma_i(\cdot,v(s,\cdot))-\sigma_i(\cdot,w(s,\cdot))\big)\Big\|_{L^p(\mt^N)}^p\bigg)^\frac{q}{p}\\
&\qquad=C\bigg(\int_{\mt^N}\Big(\sum_{i=1}^d\big|\sigma_i(y,v(s,y))-\sigma_i(y,w(s,y))\big|^2\Big)^\frac{p}{2}\dif y\bigg)^\frac{q}{p}\\
&\qquad\leq C\,\|v(s)-w(s)\|_{L^p(\mt^N)}^q,
\end{split} 
\end{equation*}
where the last inequality follows from the fact that all $\sigma_i,\,i=1,\dots,d,$ have bounded derivatives therefore are Lipschitz continuous. We conclude
$$\big\|\mathscr{K}_2v-\mathscr{K}_2w\big\|_\mathscr{H}^q\leq C\, T^\frac{q}{2}\|v-w\|_\mathscr{H}^q.$$
Consequently
$$\big\|\mathscr{K}v-\mathscr{K}w\|_\mathscr{H}\leq C\,\big(T^{1-\delta}+T^\frac{1}{2}\big)\|v-w\|_\mathscr{H},$$
where the constant does not depend on $T$ and $u_0$. Therefore, if 
\begin{equation}\label{fixx}
C\,\big(T^{1-\delta}+T^\frac{1}{2}\big)<1
\end{equation}
then the mapping $\mathscr{K}$ has unique fixed point $u$ in $\mathscr{H}$ which is a mild solution of \eqref{semilin}. The condition on $T$ can be easily removed by considering the equation on intervals $[0,\tilde{T}],\,[\tilde{T},2\tilde{T}],\,\dots$  with $\tilde{T}$ satisfying \eqref{fixx}.
\end{proof}
\end{prop}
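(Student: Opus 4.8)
The plan is to realise the mild solution as the fixed point of the variation-of-constants map
\begin{equation*}
(\mathscr{K}v)(t)=\mathcal{S}_p(t)u_0+\int_0^t\mathcal{S}_p(t-s)F(v(s))\,\dif s+\int_0^t\mathcal{S}_p(t-s)\sigma(v(s))\,\dif W(s)
\end{equation*}
on the Banach space $\mathscr{H}=L^q(\Omega\times[0,T],\mathcal{P};L^p(\mt^N))$, and to verify the hypotheses of the Banach fixed point theorem: that $\mathscr{K}$ maps $\mathscr{H}$ into itself and is a contraction for $T$ small. The initial-data term lies in $\mathscr{H}$ at once, since $\mathcal{S}_p$ is bounded and $u_0\in L^q(\Omega;L^p(\mt^N))$.

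For the drift term the difficulty is that $F$ takes values in the negative-order space $W^{-2l+1,p}(\mt^N)$, so $\mathscr{K}_1v$ only makes sense through the smoothing of the analytic semigroup. I would factor
$$\mathcal{S}_p(t-s)F(v(s))=(-\mathcal{A}_p)^\delta\mathcal{S}_p(t-s)\,(-\mathcal{A}_p)^{-\delta}F(v(s)),\qquad \delta=\tfrac{2l-1}{2l},$$
using that $(-\mathcal{A}_p)^\delta$ commutes with $\mathcal{S}_p(t-s)$. The key point is that $(-\mathcal{A}_p)^{-\delta}\totdif^\alpha$ is bounded on $L^p(\mt^N)$ whenever $|\alpha|\leq 2l-1=2l\delta$, as its formal adjoint $\totdif^\alpha(-\mathcal{A}_{p^*})^{-\delta}$ maps $L^{p^*}$ boundedly into itself; hence the assembled operator $\mathcal{B}_p$ is bounded. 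Combining this with the smoothing bound \eqref{pazy12}, i.e. $\|(-\mathcal{A}_p)^\delta\mathcal{S}_p(t-s)\|\leq C(t-s)^{-\delta}$, with the linear growth of the $f_\alpha$ (a consequence of bounded first derivatives), and with the Young inequality for convolutions — legitimate because $\delta<1$ makes the kernel $(t-s)^{-\delta}$ integrable on $[0,T]$ — I would arrive at
$$\|\mathscr{K}_1v\|_{\mathscr{H}}^q\leq C\,T^{q(1-\delta)}\big(T+\|v\|_{\mathscr{H}}^q\big).$$

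For the stochastic term I would invoke the Burkholder--Davis--Gundy inequality for martingales with values in the 2-smooth space $L^p(\mt^N)$, reducing matters to the $\gamma$-radonifying norm of $\sigma(v(s))$. That norm is controlled exactly as in \eqref{rr}: using \eqref{radon} to pass from the Gaussian sum to the square function and then the linear growth condition \eqref{lrust}, one gets $\|\sigma(v(s))\|_{\gamma(\mathfrak{U};L^p(\mt^N))}^q\leq C(1+\|v(s)\|_{L^p(\mt^N)}^q)$, whence after the time integration $\|\mathscr{K}_2v\|_{\mathscr{H}}^q\leq C\,T^{q/2}(T+\|v\|_{\mathscr{H}}^q)$. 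Together these show $\mathscr{K}(\mathscr{H})\subset\mathscr{H}$.

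Finally, for the contraction property I would repeat the two estimates with $f$ and $\sigma$ replaced by their increments, now using the Lipschitz continuity of the $f_\alpha$ and $\sigma_i$ (again from the bounded derivatives) in place of linear growth. This yields
$$\|\mathscr{K}v-\mathscr{K}w\|_{\mathscr{H}}\leq C\big(T^{1-\delta}+T^{1/2}\big)\|v-w\|_{\mathscr{H}},$$
with $C$ independent of $T$, so $\mathscr{K}$ is a contraction once $C(T^{1-\delta}+T^{1/2})<1$; the Banach fixed point theorem then produces the unique mild solution on a short interval, and one patches the pieces together over $[0,\tilde T],[\tilde T,2\tilde T],\dots$ to cover all of $[0,T]$. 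The main obstacle is the drift step: everything hinges on choosing $\delta=\frac{2l-1}{2l}$ so that \emph{simultaneously} $(-\mathcal{A}_p)^{-\delta}\totdif^\alpha$ is bounded and the singularity $(t-s)^{-\delta}$ stays integrable, which is precisely what lets a negative-order nonlinearity be absorbed by the parabolic smoothing.
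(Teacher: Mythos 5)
Your proposal is correct and follows essentially the same route as the paper's own proof: the same decomposition $\mathscr{K}=\mathcal{S}_p(\cdot)u_0+\mathscr{K}_1+\mathscr{K}_2$, the same choice $\delta=\frac{2l-1}{2l}$ with the duality argument for the boundedness of $\mathcal{B}_p$, the smoothing estimate \eqref{pazy12} combined with Young's convolution inequality for the drift, the Burkholder--Davis--Gundy inequality together with the $\gamma$-radonifying norm computation \eqref{radon}--\eqref{rr} for the stochastic term, and the final contraction plus interval-patching argument. Nothing essential is missing.
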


The estimates from previous proposition can be improved in order to obtain a better regularity of $u$.

\begin{prop}[Estimate in $W^{1,p}(\mt^N)$]\label{prop2}
Let $p\in[2,\infty),\,q\in(2,\infty)$. Assume that $u_0\in L^q(\Omega;W^{1,p}(\mt^N))$ and
$$f_\alpha\in C^{2l-1}(\mr),\;\; |\alpha|\leq 2l-1;\qquad\qquad\sigma_i\in C^1(\mt^N\times\mr),\;\;i=1,\dots,d,$$
have bounded derivatives of first order. Then the mild solution of \eqref{semilin} belongs to
$$L^q(\Omega;C([0,T];W^{1,p}(\mt^N)))$$
and the following estimate holds true
\begin{equation}\label{sobolev1}
\stred\sup_{0\leq t\leq T}\|u(t)\|_{W^{1,p}(\mt^N)}^q\leq C\big(1+\stred\|u_0\|_{W^{1,p}(\mt^N)}^{q}\big).
\end{equation}

\begin{proof}
Recall that $u$ is the limit of Picard iterations: let $u^0(t)=u_0$ and for $n\in\mn$ define
\begin{equation*}\label{mild}
\begin{split}
u^n(t)&= \mathcal{S}_p(t)\,u_0+\int_0^t\mathcal{S}_p(t-s)\,F\big(u^{n-1}(s)\big)\,\dif s\\
&\;\quad+\int_0^t  \mathcal{S}_p(t-s)\,\sigma\big(u^{n-1}(s)\big)\,\dif W(s).
\end{split}
\end{equation*}
We will show
\begin{equation}\label{kaka}
\stred\sup_{0\leq t\leq T}\|u^{n}(t)\|_{W^{1,p}(\mt^N)}^q\leq C\big(1+\stred\|u_0\|_{W^{1,p}(\mt^N)}^q\big),\qquad\forall n\in\mn,
\end{equation}
with a constant $C$ independent of $n$.
By induction on $n$, assume that the hypothesis is satisfied for $u^{n-1}$ and compute the estimate for $u^n$.
We will proceed term by term and follow the ideas of Proposition \ref{prop1}. Consider the operators $\mathcal{S}_p(t),\,t\geq0,$ restricted to the Sobolev space $W^{1,p}(\mt^N)$ and denote them by $\mathcal{S}_{1,p}(t),\,t\geq0$. These operators form a bounded analytic semigroup on $W^{1,p}(\mt^N)$ generated by the part of $\mathcal{A}_p$ in $W^{1,p}(\mt^N)$ (see \cite[Theorem V.2.1.3]{ama}). Let us denote this generator by $\mathcal{A}_{1,p}$.
Therefore we have
\begin{equation*}
\begin{split}
\stred\sup_{0\leq t\leq T}\|\mathcal{S}_{p}(t)u_0\|_{W^{1,p}(\mt^N)}^q&=\stred\sup_{0\leq t\leq T}\|\mathcal{S}_{1,p}(t)u_0\|_{W^{1,p}(\mt^N)}^q\\
&\leq C\,\stred\|u_0\|_{W^{1,p}(\mt^N)}^q.
\end{split}
\end{equation*}
As above, let $\delta=\frac{2l-1}{2l}$ and consider the operator
\begin{equation*}
\begin{split}
\mathcal{B}_{1,p}\;:\;W^{1,p}(\mt^N;\mr^\gamma)&\longrightarrow W^{1,p}(\mt^N)\\
\{z_\alpha\}_{\alpha=1}^\gamma&\longmapsto (-\mathcal{A}_{p})^{-\delta}\sum_{\substack{|\alpha|\leq 2l-1\\a_\alpha\neq0}}a_\alpha\totdif^\alpha z_\alpha.
\end{split}
\end{equation*}
We will show that it is a bounded operator. 
Indeed, according to Proposition \ref{prop1}, for any $z\in W^{1,p}(\mt^N;\mr^\gamma)$,
$$\big\|\mathcal{B}_{1,p}z\big\|_{L^p(\mt^N)}\leq C\|z\|_{L^p(\mt^N;\mr^\gamma)}.$$
For any multiindex $\beta=(\beta_1,\dots,\beta_N)$ such that $|\beta|=1$, we can write
\begin{equation*}
\begin{split}
\big\|\totdif^\beta\mathcal{B}_{1,p}z\big\|_{L^p(\mt^N)}=\Big\|\totdif^\beta(-\mathcal{A}_{p})^{-\frac{1}{2l}}(-\mathcal{A}_{p})^{-\frac{2l-1}{2l}+\frac{1}{2l}}\sum_{\substack{|\alpha|\leq 2l-1\\a_\alpha\neq0}}a_\alpha\totdif^\alpha z_\alpha\Big\|_{L^p(\mt^N)},
\end{split}
\end{equation*}
where the operator $L^p(\mt^N)\rightarrow L^p(\mt^N),\,v\mapsto\totdif^\beta(-\mathcal{A}_{p})^{-\frac{1}{2l}}v,\,$ is bounded. For each $\alpha,\,|\alpha|\leq 2l-1,$ let us fix a multiindex $\alpha'$ such that it is of order 1 and $\alpha-\alpha'$ is also a multiindex, i.e. $|\alpha'|=1$ and $|\alpha-\alpha'|=|\alpha|-1$. Note, that the operators $L^{r}(\mt^N)\rightarrow L^{r}(\mt^N),\,v\mapsto a_\alpha \totdif^{\alpha-\alpha'}(-\mathcal{A}_{r})^\frac{-2l+2}{2l}v,\,|\alpha|\leq 2l-1,$ are bounded as well. If $p^*$ is the conjugate exponent to $p$ we conclude
\begin{equation*}
\begin{split}
\Big\|&(-\mathcal{A}_{p})^{\frac{-2l+2}{2l}}\sum_{\substack{|\alpha|\leq 2l-1\\a_\alpha\neq0}}a_\alpha\totdif^\alpha z_\alpha\Big\|_{L^p(\mt^N)}\\
&\qquad=\sup_{\substack{v\in L^{p^*}(\mt^N)\\ \|v\|_{L^{p^*}(\mt^N)}\leq 1}}\Bigg|\int_{\mt^N}(-\mathcal{A}_{p})^{\frac{-2l+2}{2l}}\sum_{\substack{|\alpha|\leq 2l-1\\a_\alpha\neq 0}}a_\alpha\totdif^\alpha z_\alpha(x)\, v(x)\,\dif x\Bigg|\\
&\qquad=\sup_{\substack{v\in L^{p^*}(\mt^N)\\ \|v\|_{L^{p^*}(\mt^N)}\leq 1}}\Bigg|\sum_{\substack{|\alpha|\leq 2l-1\\a_\alpha\neq 0}}\int_{\mt^N}\totdif^{\alpha'}z_\alpha(x)\,a_\alpha\totdif^{\alpha-\alpha'}(-\mathcal{A}_{p^*})^{\frac{-2l+2}{2l}} v(x)\,\dif x\Bigg|\\
&\qquad\leq \Big\|\big\{\totdif^{\alpha'}z_\alpha\big\}_{\substack{|\alpha|\leq 2l-1\\a_\alpha\neq 0}}\Big\|_{L^p(\mt^N;\mr^\gamma)}\\
&\qquad\quad\quad\quad\times\sup_{\substack{v\in L^{p^*}(\mt^N)\\ \|v\|_{L^{p^*}(\mt^N)}\leq 1}}\bigg\|\Big\{a_\alpha\totdif^{\alpha-\alpha'}(-\mathcal{A}_{p^*})^{\frac{-2l+2}{2l}} v\Big\}_{\substack{|\alpha|\leq 2l-1\\a_\alpha\neq 0}}\bigg\|_{L^{p^*}(\mt^N;\mr^\gamma)}\\
&\qquad\leq C\,\|z\|_{W^{1,p}(\mt^N;\mr^\gamma)}
\end{split}
\end{equation*}
and the claim follows.
Therefore, we have
\begin{equation*}
\begin{split}
\stred\sup_{0\leq t\leq T}&\bigg\|\int_0^t\mathcal{S}_{p}(t-s)F\big(u^{n-1}(s)\big)\,\dif s\bigg\|_{W^{1,p}(\mt^N)}^q\\
&\leq \stred\sup_{0\leq t\leq T}\bigg(\int_0^t\Big\|(-\mathcal{A}_{p})^\delta\mathcal{S}_{p}(t-s)\mathcal{B}_{1,p}f\big(u^{n-1}(s)\big)\Big\|_{W^{1,p}(\mt^N)}\dif s\bigg)^q\\
&\leq \stred\sup_{0\leq t\leq T}\bigg(\int_0^t\Big\|(-\mathcal{A}_{1,p})^\delta\mathcal{S}_{1,p}(t-s)\mathcal{B}_{1,p}f\big(u^{n-1}(s)\big)\Big\|_{W^{1,p}(\mt^N)}\dif s\bigg)^q\\
&\leq C\,\stred\sup_{0\leq t\leq T}\bigg(\int_0^t\frac{1}{(t-s)^\delta}\big\|f\big(u^{n-1}(s)\big)\big\|_{W^{1,p}(\mt^N)}\dif s\bigg)^q\\
&\leq C T^{q(1-\delta)}\stred\sup_{0\leq t\leq T}\big\|f\big(u^{n-1}(t)\big)\big\|_{W^{1,p}(\mt^N)}^q.
\end{split}
\end{equation*}
To deduce a similar estimate for the stochastic term, we need to consider stochastic integration in $W^{1,p}(\mt^N)$. It holds
\begin{equation*}
\begin{split}
\big\|\sigma\big(u^{n-1}(s)\big)&\big\|_{\gamma(\mathfrak{U};W^{1,p}(\mt^N))}^q=\bigg(\stred\Big\|\sum_{i=1}^d\xi_i\,\sigma_i\big(\cdot,u^{n-1}(s,\cdot)\big)\Big\|_{W^{1,p}(\mt^N)}^2\bigg)^\frac{q}{2}\\
&\leq \bigg(\stred\Big\|\sum_{i=1}^d\xi_i\,(-\mathcal{A}_{p})^\frac{1}{2l}\sigma_i\big(\cdot,u^{n-1}(s,\cdot)\big)\Big\|_{L^p(\mt^N)}^p\bigg)^\frac{q}{p}\\
&= C\bigg(\int_{\mt^N}\Big(\sum_{i=1}^d\big|(-\mathcal{A}_{p})^\frac{1}{2l}\sigma_i\big(y,u^{n-1}(s,y)\big)\big|^2\Big)^\frac{p}{2}\dif y\bigg)^\frac{q}{p}\\\
&\leq C\sum_{i=1}^{d}\big\|\sigma_i\big(\cdot,u^{n-1}(s,\cdot)\big)\big\|^q_{W^{1,p}(\mt^N)}.
\end{split}
\end{equation*}
Since $q\in(2,\infty)$, we make use of the maximal estimate for stochastic convolution \cite[Corollary 3.5]{b1} which can be proved by the factorization method. For the reader's convenience we recall the basic steps of the proof.
According to the stochastic Fubini theorem \cite[Proposition 3.3(v)]{b2},
\begin{equation*}
\begin{split}
\int_0^t\mathcal{S}_p(t-s)\sigma\big(u^{n-1}(s)\big)\dif W(s)=\frac{1}{\Gamma(\alpha)}\int_0^t(t-s)^{\alpha-1}\mathcal{S}_p(t-s)\,y(s)\dif s,
\end{split}
\end{equation*}
where
\begin{equation*}
\begin{split}
y(s)=\frac{1}{\Gamma(1-\alpha)}\int_0^s(s-r)^{-\alpha}\mathcal{S}_p(s-r)\sigma\big(u^{n-1}(r)\big)\dif W(r).
\end{split}
\end{equation*}
Hence application of the H\"{o}lder, Burkholder-Davis-Gundy and Young inequalities yields (here the constant $C$ is independent on $T$)
\begin{equation*}
\begin{split}
\stred\sup_{0\leq t\leq T}\bigg\|\int_0^t\mathcal{S}_{p}(t-s)&\sigma\big(u^{n-1}(s)\big)\dif W(s)\bigg\|_{W^{1,p}(\mt^N)}^q\\
&\leq CT^{\frac{q}{2}-1}\,\stred\int_0^T\big\|\sigma\big(u^{n-1}(s)\big)\big\|_{\gamma(\mathfrak{U};W^{1,p}(\mt^N))}^q\dif s\\
\end{split}
\end{equation*}
so
\begin{equation*}
\begin{split}
\stred\sup_{0\leq t\leq T}\bigg\|\int_0^t\mathcal{S}_{p}(t-s)&\sigma\big(u^{n-1}(s)\big)\dif W(s)\bigg\|_{W^{1,p}(\mt^N)}^q\\
&\leq CT^{\frac{q}{2}-1}\sum_{i=1}^d\stred\int_0^T\big\|\sigma_i\big(\cdot,u^{n-1}(s,\cdot)\big)\big\|^q_{W^{1,p}(\mt^N)}\dif s\\
&\leq CT^{\frac{q}{2}}\sum_{i=1}^d\stred\sup_{0\leq t\leq T}\big\|\sigma_i\big(\cdot,u^{n-1}(t,\cdot)\big)\big\|^q_{W^{1,p}(\mt^N)}
\end{split}
\end{equation*}
and finally
\begin{equation*}\label{unifff}
\begin{split}
\stred\sup_{0\leq t\leq T}\|u^n(t)&\|_{W^{1,p}(\mt^N)}^q\leq C\,\stred\|u_0\|_{W^{1,p}(\mt^N)}^q\\
&\quad+CT^{q(1-\delta)}\stred\sup_{0\leq t\leq T}\big\|f\big(u^{n-1}(t)\big)\big\|_{W^{1,p}(\mt^N)}^q\\
&\quad+CT^\frac{q}{2}\sum_{i=1}^d\stred\sup_{0\leq t\leq T}\big\|\sigma_i\big(\cdot,u^{n-1}(t,\cdot)\big)\big\|_{W^{1,p}(\mt^N)}^q,
\end{split}
\end{equation*}
where the constant does not depend on $n$. 
Now, we make use of Remark \ref{first} and obtain
\begin{equation*}
\begin{split}
\stred\sup_{0\leq t\leq T}\|u^n(t)&\|_{W^{1,p}(\mt^N)}^q\leq C\,\stred\|u_0\|_{W^{1,p}(\mt^N)}^q\\
&\quad\quad\quad+C\big(T^{q(1-\delta)}+T^\frac{q}{2}\big)\bigg(1+\stred\sup_{0\leq t\leq T}\|u^{n-1}(t)\|_{W^{1,p}(\mt^N)}^q\bigg).
\end{split}
\end{equation*}
Let us make an additional hypothesis: assume that $T$ is such that
\begin{equation}\label{lplp}
C_T=C\big(T^{q(1-\delta)}+T^\frac{q}{2}\big)<1.
\end{equation}
Denoting $K_n=\stred\sup_{0\leq t\leq T}\|u^n(t)\|_{W^{1,p}(\mt^N)}^q,\,n\in\mn_0,$ we have
$$K_n\leq C\,\stred\|u_0\|_{W^{1,p}(\mt^N)}^q+C_T\big(1+K_{n-1}\big)$$
and inductively in $n$
\begin{equation}\label{mk}
\stred\sup_{0\leq t\leq T}\|u^n(t)\|_{W^{1,p}(\mt^N)}^q\leq \tilde{C}_T\big(1+\stred\|u_0\|_{W^{1,p}(\mt^N)}^q\big),
\end{equation}
where $\tilde{C}_T$ is independent $n$. So \eqref{kaka} follows if $T$ is sufficiently small.

In order to remove this condition, we consider a suitable partition of the interval $[0,T]$. Let $\tilde{T}>0$ satisfy \eqref{lplp} and $0<\tilde{T}<2\tilde{T}<\cdots<K\tilde{T}=T$ for some $K\in\mn$. Fix $k\in\{1,\dots,K\}$. We will study the processes $u^n,\,n\in\mn,$ on the interval $[(k-1)\tilde{T},k\tilde{T}]$ and find an estimate similar to \eqref{mk}. Each $u^n,\,n\in\mn,$ is the unique mild solution to the corresponding linear equation
\begin{equation*}
\begin{split}
\dif u^n&=\big[\mathcal{A}u^n+F\big(u^{n-1}\big)\big]\,\dif t+\sigma\big(u^{n-1}\big)\,\dif W,\qquad x\in\mt^N,\,t\in(0,T),\\
u(0)&=u_0.
\end{split}
\end{equation*}
Let $v(t,s,;u_0),\,t\geq s\geq0,$ be the mild solution of this problem with the initial condition $u_0$ given at time $s$. It follows from the uniqueness that for arbitrary $t\geq r\geq s\geq 0$
$$v\big(t,r;v(r,s;u_0)\big)=v(t,s;u_0)\qquad\prst\text{-a.s.}$$
and therefore we can write
\begin{equation*}
\begin{split}
u^n(t)=&\,\mathcal{S}_p\big(t-(k-1)\tilde{T}\big)u^n\big((k-1)\tilde{T}\big)+\int^t_{(k-1)\tilde{T}}\mathcal{S}_p(t-s)F\big(u^{n-1}(s)\big)\,\dif s\\
&+\int^t_{(k-1)\tilde{T}}\mathcal{S}_p(t-s)\sigma\big(u^{n-1}(s)\big)\,\dif W(s),\qquad t\in\big[(k-1)\tilde{T},T\big].
\end{split}
\end{equation*}
Following the same approach as above we obtain
$$\stred\sup_{(k-1)\tilde{T}\leq t\leq k\tilde{T}}\|u^n(t)\|_{W^{1,p}(\mt^N)}^q\leq \tilde{C}_{\tilde{T}}\Big(1+\stred\big\|u^n\big((k-1)\tilde{T}\big)\big\|_{W^{1,p}(\mt^N)}^q\Big)$$
with a constant similar to $\tilde{C}_{T}$ in \eqref{mk}. Hence
\begin{equation*}
\begin{split}
\stred&\sup_{(k-1)\tilde{T}\leq t\leq k\tilde{T}}\|u^n(t)\|_{W^{1,p}(\mt^N)}^q\\
&\quad\leq \tilde{C}_{\tilde{T}}\Big(1+\stred\sup_{(k-2)\tilde{T}\leq t\leq (k-1)\tilde{T}}\big\|u^n(t)\big\|_{W^{1,p}(\mt^N)}^q\Big)\\
&\quad\leq \sum_{i=1}^K(\tilde{C}_{\tilde{T}})^i+(\tilde{C}_{\tilde{T}})^K\stred\|u_0\|_{W^{1,p}(\mt^N)}^q\leq \bar{C}\big(1+\stred\|u_0\|_{W^{1,p}(\mt^N)}^q\big),
\end{split}
\end{equation*}
where the constant $\bar{C}$ is independent of $k$  and $n$. Finally, the estimate \eqref{kaka} follows:
\begin{equation*}
\begin{split}
\stred\sup_{0\leq t\leq T}&\|u^n(t)\|_{W^{1,p}(\mt^N)}^q=\stred\max_{k=1,\dots,K}\sup_{(k-1)\tilde{T}\leq t\leq k\tilde{T}}\|u^n(t)\|_{W^{1,p}(\mt^N)}^q\\
&\leq\sum_{k=1}^K\stred\sup_{(k-1)\tilde{T}\leq t\leq k\tilde{T}}\|u^n(t)\|_{W^{1,p}(\mt^N)}^q\leq K\bar{C}\big(1+\stred\|u_0\|_{W^{1,p}(\mt^N)}^q\big).
\end{split}
\end{equation*}

We have now all in hand to deduce that the sequence $\{u^n;\,n\in\mn\}$ is bounded in
$$L^q(\Omega;L^\infty(0,T;W^{1,p}(\mt^N)))$$
and therefore has a weak-star convergent subsequence.
Any norm is weakly lower semicontinuous so we get the estimate \eqref{sobolev1} for the limit process $u$. Moreover, since the stochastic convolution has a continuous modification according to \cite[Corollary 3.5]{b1}, the proof is complete.
\end{proof}
\end{prop}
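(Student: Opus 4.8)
The plan is to prove the uniform bound \eqref{kaka} for the Picard iterates $u^n$ by induction on $n$ and then pass to the limit. Assuming the bound for $u^{n-1}$, I would estimate the three terms of the mild formula for $u^n$ separately, in the spirit of Proposition \ref{prop1} but now measuring everything in the $W^{1,p}(\mt^N)$-norm. For the free term the decisive fact is that $\mathcal{S}_p$ restricts to a bounded analytic semigroup $\mathcal{S}_{1,p}$ on $W^{1,p}(\mt^N)$, generated by the part $\mathcal{A}_{1,p}$ of $\mathcal{A}_p$ in that space; its boundedness yields $\stred\sup_{t}\|\mathcal{S}_p(t)u_0\|_{W^{1,p}(\mt^N)}^q\leq C\,\stred\|u_0\|_{W^{1,p}(\mt^N)}^q$ at once.

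For the drift I would again split $F=(-\mathcal{A}_p)^\delta\mathcal{B}_{1,p}f$ with $\delta=\frac{2l-1}{2l}$ and verify that $\mathcal{B}_{1,p}$ maps $W^{1,p}(\mt^N;\mr^\gamma)$ boundedly into $W^{1,p}(\mt^N)$. The $L^p$-bound is already supplied by Proposition \ref{prop1}; for a first-order derivative $\totdif^\beta$ I would peel off one derivative as the bounded operator $\totdif^\beta(-\mathcal{A}_p)^{-1/2l}$ and transfer the remaining derivatives onto the test function by duality, which reduces the $W^{1,p}$-estimate of $\mathcal{B}_{1,p}z$ to the $W^{1,p}$-norm of $z$. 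Then the smoothing bound \eqref{pazy12} for $(-\mathcal{A}_{1,p})^\delta\mathcal{S}_{1,p}(t-s)$ together with Young's convolution inequality produces a factor $T^{q(1-\delta)}\,\stred\sup_t\|f(u^{n-1}(t))\|_{W^{1,p}(\mt^N)}^q$, and Remark \ref{first}(i) controls $\|f(u^{n-1})\|_{W^{1,p}(\mt^N)}$ linearly by $\|u^{n-1}\|_{W^{1,p}(\mt^N)}$.

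The stochastic convolution is the delicate point. I would compute its $\gamma$-radonifying norm through \eqref{radon}, pulling $(-\mathcal{A}_p)^{1/2l}$ across to pass from $W^{1,p}$ to $L^p$, which bounds it by $\sum_i\|\sigma_i(\cdot,u^{n-1})\|_{W^{1,p}(\mt^N)}^q$, again linear by Remark \ref{first}(ii). The real obstacle is upgrading this to a \emph{maximal} (supremum-in-$t$) estimate: for that I would use the factorization method, writing the convolution as $(t-s)^{\alpha-1}\mathcal{S}_p(t-s)$ applied to a process $y(s)$ given by an $(s-r)^{-\alpha}$-weighted stochastic integral, and then combine Hölder's inequality, the Burkholder-Davis-Gundy inequality in the 2-smooth space $L^p(\mt^N)$, and Young's inequality. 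This is exactly where the hypothesis $q>2$ is needed, so that an admissible exponent $\alpha\in(1/q,1/2)$ exists; both the gradient bound for the noise and the pathwise continuity of the limit rest on this step.

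Collecting the three estimates yields $K_n\leq C\,\stred\|u_0\|_{W^{1,p}(\mt^N)}^q+C_T(1+K_{n-1})$ with $C_T=C(T^{q(1-\delta)}+T^{q/2})$, where $K_n=\stred\sup_t\|u^n(t)\|_{W^{1,p}(\mt^N)}^q$. For $T$ small enough that $C_T<1$ the recursion closes and gives \eqref{kaka} with a constant independent of $n$. To drop the smallness of $T$ I would partition $[0,T]$ into subintervals of length $\tilde T$ with $C_{\tilde T}<1$ and iterate the bound across them, using the cocycle identity $v(t,r;v(r,s;u_0))=v(t,s;u_0)$ coming from uniqueness in Proposition \ref{prop1}. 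Finally, the uniform bound makes $\{u^n\}$ bounded in $L^q(\Omega;L^\infty(0,T;W^{1,p}(\mt^N)))$, hence weak-$*$ sequentially compact; weak lower semicontinuity of the norm passes the estimate to the limit $u$, and the continuous modification of the stochastic convolution delivers the $C([0,T];W^{1,p}(\mt^N))$-regularity.
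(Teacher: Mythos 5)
Your proposal is correct and follows essentially the same route as the paper: induction on the Picard iterates, the restricted semigroup $\mathcal{S}_{1,p}$, the duality argument for the boundedness of $\mathcal{B}_{1,p}$, the factorization method for the maximal estimate of the stochastic convolution, the recursion in $K_n$ with the smallness condition removed by partitioning $[0,T]$, and the weak-star compactness plus lower semicontinuity to pass to the limit. No substantive differences to report.
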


Proof of regularity in higher order Sobolev spaces (order greater than 1) is more complicated as the norm of a superposition does not, in general, grow linearly with the norm of the inner function (cf. Proposition \ref{admissible}, Corollary \ref{admissible1}, Remark \ref{first}).

\begin{prop}[Estimate in $W^{m,p}(\mt^N)$]\label{prop3}
Let $p\in[2,\infty),\,q\in(2,\infty),\,m\in\mn,$ $m\geq 2$. Assume that $u_0\in L^q(\Omega;W^{m,p}(\mt^N))\cap L^{mq}(\Omega;W^{1,mp}(\mt^N))$ and
$$f_\alpha\in C^{m}(\mr)\cap C^{2l-1}(\mr),\;\; |\alpha|\leq 2l-1;\;\quad\sigma_i\in C^m(\mt^N\times\mr),\;\;i=1,\dots,d,$$
have bounded derivatives up to order $m$.
Then the mild solution of \eqref{semilin} belongs to
$$L^q(\Omega;C([0,T];W^{m,p}(\mt^N)))$$
and the following estimate holds true
\begin{equation}\label{sobolev2}
\stred\sup_{0\leq t\leq T}\|u(t)\|_{W^{m,p}(\mt^N)}^q\leq C\big(1+\stred\|u_0\|_{W^{m,p}(\mt^N)}^q+\stred\|u_0\|_{W^{1,mp}(\mt^N)}^{mq}\big).
\end{equation}

\begin{proof}
First, we intend to prove the following estimate for the Picard iterations
\begin{equation}\label{dada}
\stred\sup_{0\leq t\leq T}\|u^n(t)\|_{W^{m,p}(\mt^N)}^q\leq C\big(1+\stred\|u_0\|_{W^{m,p}(\mt^N)}^q+\stred\|u_0\|_{W^{1,mp}(\mt^N)}^{mq}\big),
\end{equation}
with a constant independent of $n$. By induction on $n$, assume that the hypothesis is satisfied for $u^{n-1}$ and compute the estimate for $u^n$. The following arguments and calculations are mostly similar to those in Proposition \ref{prop2}. Recall that according to \eqref{kaka}, we have
\begin{equation}\label{koko}
\stred\sup_{0\leq t\leq T}\|u^n(t)\|_{W^{1,mp}(\mt^N)}^{mq}\leq C\big(1+\stred\|u_0\|_{W^{1,mp}(\mt^N)}^{mq}\big),\qquad\forall n\in\mn.
\end{equation}
Let us consider the restrictions of the operators $\mathcal{S}_p(t),\,t\geq 0,$ to the Sobolev space $W^{m,p}(\mt^N)$ and denote them by $\mathcal{S}_{m,p}(t),\,t\geq0$. By \cite[Theorem V.2.1.3]{ama}, we obtain a strongly continuous semigroup of on $W^{m,p}(\mt^N)$ generated by part of $\mathcal{A}_p$ in $W^{m,p}(\mt^N)$. We denote the generator by $\mathcal{A}_{m,p}$. It follows
\begin{equation*}
\begin{split}
\stred\sup_{0\leq t\leq T}\|\mathcal{S}_{p}(t) u_0\|_{W^{m,p}(\mt^N)}^q&=\stred\sup_{0\leq t\leq T}\|\mathcal{S}_{m,p}(t) u_0\|_{W^{m,p}(\mt^N)}^q\\
&\leq C\,\stred\|u_0\|_{W^{m,p}(\mt^N)}^q.
\end{split}
\end{equation*}
As above, we employ the following bounded operator: let $\delta=\frac{2l-1}{2l}$
\begin{equation*}
\begin{split}
\mathcal{B}_{m,p}\;:\;W^{m,p}(\mt^N;\mr^\gamma)&\longrightarrow W^{m,p}(\mt^N)\\
\{z_\alpha\}_{\alpha=1}^\gamma&\longmapsto (-\mathcal{A}_{p})^{-\delta}\sum_{\substack{|\alpha|\leq 2l-1\\a_\alpha\neq0}}a_\alpha\totdif^\alpha z_\alpha,
\end{split}
\end{equation*}
so
\begin{equation*}
\begin{split}
\stred&\sup_{0\leq t\leq T}\bigg\|\int_0^t\mathcal{S}_{p}(t-s)F\big(u^{n-1}(s)\big)\,\dif s\bigg\|_{W^{m,p}(\mt^N)}^q\\
&\quad\leq \stred\sup_{0\leq t\leq T}\bigg(\int_0^t\Big\|(-\mathcal{A}_{p})^\delta\mathcal{S}_{p}(t-s)\mathcal{B}_{m,p}f\big(u^{n-1}(s)\big)\Big\|_{W^{m,p}(\mt^N)}\dif s\bigg)^q\\
&\quad\leq \stred\sup_{0\leq t\leq T}\bigg(\int_0^t\Big\|(-\mathcal{A}_{m,p})^\delta\mathcal{S}_{m,p}(t-s)\mathcal{B}_{m,p}f\big(u^{n-1}(s)\big)\Big\|_{W^{m,p}(\mt^N)}\dif s\bigg)^q\\
&\quad\leq C\,\stred\sup_{0\leq t\leq T}\bigg(\int_0^t\frac{1}{(t-s)^\delta}\big\|f\big(u^{n-1}(s)\big)\big\|_{W^{m,p}(\mt^N)}\dif s\bigg)^q\\
&\quad\leq CT^{q(1-\delta)}\stred\sup_{0\leq t\leq T}\big\|f\big(u^{n-1}(t)\big)\big\|_{W^{m,p}(\mt^N)}^q.
\end{split}
\end{equation*}
And for the stochastic term
\begin{equation*}
\begin{split}
\big\|\sigma\big(u^{n-1}(s)\big)&\big\|_{\gamma(\mathfrak{U};W^{m,p}(\mt^N))}^q=\bigg(\stred\Big\|\sum_{i=1}^d\xi_i\,\sigma_i\big(\cdot,u^{n-1}(s,\cdot)\big)\Big\|_{W^{m,p}(\mt^N)}^2\bigg)^\frac{q}{2}\\
&\leq \bigg(\stred\Big\|\sum_{i=1}^d\xi_i\,(-\mathcal{A}_{p})^\frac{m}{2l}\sigma_i\big(\cdot,u^{n-1}(s,\cdot)\big)\Big\|_{L^p(\mt^N)}^p\bigg)^\frac{q}{p}\\
&= C\bigg(\int_{\mt^N}\Big(\sum_{i=1}^d\big|(-\mathcal{A}_{p})^\frac{m}{2l}\sigma_i\big(y,u^{n-1}(s,y)\big)\big|^2\Big)^\frac{p}{2}\dif y\bigg)^\frac{q}{p}\\
&\leq C\sum_{i=1}^{d}\big\|\sigma_i\big(\cdot,u^{n-1}(s,\cdot)\big)\big\|^q_{W^{m,p}(\mt^N)}
\end{split}
\end{equation*}
hence
\begin{equation*}
\begin{split}
\stred\sup_{0\leq t\leq T}\bigg\|\int_0^t\mathcal{S}_{p}(t-s)&\sigma\big(u^{n-1}(s)\big)\dif W(s)\bigg\|_{W^{m,p}(\mt^N)}^q\\
&\leq CT^{\frac{q}{2}-1}\,\stred\int_0^T\big\|\sigma\big(u^{n-1}(s)\big)\big\|_{\gamma(\mathfrak{U};W^{m,p}(\mt^N))}^q\dif s\\
&\leq CT^{\frac{q}{2}-1}\sum_{i=1}^d\stred\int_0^T\big\|\sigma_i\big(\cdot,u^{n-1}(s,\cdot)\big)\big\|^q_{W^{m,p}(\mt^N)}\dif s\\
&\leq CT^\frac{q}{2}\sum_{i=1}^d\stred\sup_{0\leq t\leq T}\big\|\sigma_i\big(\cdot,u^{n-1}(t,\cdot)\big)\big\|^q_{W^{m,p}(\mt^N)}.
\end{split}
\end{equation*}
We conclude
\begin{equation*}
\begin{split}
\stred\sup_{0\leq t\leq T}\|u^n(t)&\|_{W^{m,p}(\mt^N)}^q\leq C\,\stred\|u_0\|_{W^{m,p}(\mt^N)}^q\\
&+CT^{q(1-\delta)}\stred\sup_{0\leq t \leq T}\big\|f\big(u^{n-1}(t)\big)\big\|_{W^{m,p}(\mt^N)}^q\\
&+CT^\frac{q}{2}\sum_{i=1}^d\stred\sup_{0\leq t \leq T}\big\|\sigma_i\big(\cdot,u^{n-1}(t,\cdot)\big)\big\|_{W^{m,p}(\mt^N)}^q.
\end{split}
\end{equation*}
Applying Proposition \ref{admissible}, Corollary \ref{admissible1} and \eqref{koko} we obtain
\begin{equation*}
\begin{split}
&\stred\sup_{0\leq t\leq T}\|u^n(t)\|_{W^{m,p}(\mt^N)}^q\leq C\,\stred\|u_0\|_{W^{m,p}(\mt^N)}^q+C\big(T^{q(1-\delta)}+T^\frac{q}{2}\big)\\
&\qquad\times\bigg(1+\stred\sup_{0\leq t\leq T}\!\|u^{n-1}(t)\|_{W^{1,mp}(\mt^N)}^{mq}+\stred\sup_{0\leq t\leq T}\!\|u^{n-1}(t)\|_{W^{m,p}(\mt^N)}^q\bigg)\\
&\!\leq C\,\stred\|u_0\|_{W^{m,p}(\mt^N)}^q+C\big(T^{q(1-\delta)}+T^\frac{q}{2}\big)\\
&\qquad\times\bigg(1+\stred\|u_0\|_{W^{1,mp}(\mt^N)}^{mq}+\stred\sup_{0\leq t\leq T}\|u^{n-1}(t)\|_{W^{m,p}(\mt^N)}^q\bigg).
\end{split}
\end{equation*}
Let $T$ satisfy the following condition
$$C_T=C\big(T^{q(1-\delta)}+T^\frac{q}{2}\big)<1$$
and define $K_n=\stred\sup_{0\leq t\leq T}\|u^n(t)\|_{W^{m,p}(\mt^N)}^q,\,n\in\mn_0,$ $L_0=\stred\|u_0\|_{W^{1,mp}(\mt^N)}^{mq}.$
Then we have
$$K_n\leq C\,\stred\|u_0\|_{W^{m,p}(\mt^N)}^q+C_T\big(1+L_0+K_{n-1}\big)$$
hence inductively in $n$
\begin{equation*}
\stred\sup_{0\leq t\leq T}\|u^n(t)\|_{W^{m,p}(\mt^N)}^q\leq \tilde{C}_T\big(1+\stred\|u_0\|_{W^{m,p}(\mt^N)}^q+\stred\|u_0\|_{W^{1,mp}(\mt^N)}^{mq}\big),
\end{equation*}
where the constant does not depend on $n$. Therefore  \eqref{dada} follows under the additional hypothesis upon $T$. However, this condition can be removed by the same approach as in Proposition \ref{prop2}.

Similarly to Proposition \ref{prop2} we deduce that the sequence $\{u^n;\,n\in\mn\}$ is bounded in
$$L^q(\Omega;L^\infty(0,T;W^{m,p}(\mt^N)))$$
and therefore \eqref{sobolev2} holds true. Existence of a continuous modification follows again from \cite[Corollary 3.5]{b1}.
\end{proof}
\end{prop}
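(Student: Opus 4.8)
The plan is to prove, by induction on $n$, an $n$-uniform bound for the Picard iterates $u^n$ in $W^{m,p}(\mt^N)$ matching the right-hand side of \eqref{sobolev2}, and then to pass to the limit. The key preliminary observation is that the proof of Proposition \ref{prop2}, and in particular the estimate \eqref{kaka}, applies with $(p,q)$ replaced by $(mp,mq)$ (note $mp\geq2$ and $mq>2$ since $m\geq2$), which furnishes the $n$-uniform auxiliary bound
$$\stred\sup_{0\leq t\leq T}\|u^n(t)\|_{W^{1,mp}(\mt^N)}^{mq}\leq C\big(1+\stred\|u_0\|_{W^{1,mp}(\mt^N)}^{mq}\big).$$
This auxiliary bound is the device that closes the scheme and explains why the hypothesis on $u_0$ carries the extra term $L^{mq}(\Omega;W^{1,mp}(\mt^N))$.

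For the drift term I would argue exactly as in Proposition \ref{prop2}: restrict the semigroup to $W^{m,p}(\mt^N)$ to obtain an analytic semigroup $\mathcal{S}_{m,p}$ with generator $\mathcal{A}_{m,p}$ (the part of $\mathcal{A}_p$ in $W^{m,p}$, via \cite[Theorem V.2.1.3]{ama}), factor $F(u^{n-1})=(-\mathcal{A}_p)^\delta\mathcal{B}_{m,p}f(u^{n-1})$ with $\delta=\frac{2l-1}{2l}$ and $\mathcal{B}_{m,p}$ bounded on $W^{m,p}(\mt^N;\mr^\gamma)$, and then invoke the analyticity bound \eqref{pazy12} together with Young's inequality for convolutions; the drift contribution is thereby dominated by $CT^{q(1-\delta)}\stred\sup_{0\leq t\leq T}\|f(u^{n-1}(t))\|_{W^{m,p}(\mt^N)}^q$. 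For the stochastic term I would compute the $\gamma$-radonifying norm in $W^{m,p}(\mt^N)$ by identifying that norm through $(-\mathcal{A}_p)^{\frac{m}{2l}}$ and applying the Gaussian identity \eqref{radon}, obtaining the bound $C\sum_{i=1}^d\|\sigma_i(\cdot,u^{n-1}(\cdot))\|_{W^{m,p}(\mt^N)}^q$, and then apply the maximal inequality for stochastic convolutions \cite[Corollary 3.5]{b1} (factorization method, combined with H\"older, Burkholder--Davis--Gundy and Young) to control the stochastic contribution by $CT^{q/2}\sum_{i=1}^d\stred\sup_{0\leq t\leq T}\|\sigma_i(\cdot,u^{n-1}(t,\cdot))\|_{W^{m,p}(\mt^N)}^q$.

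The main obstacle, and the genuinely new ingredient compared with Propositions \ref{prop1} and \ref{prop2}, is that $f$ and $\sigma_i$ are neither Lipschitz nor of linear growth in $W^{m,p}(\mt^N)$, so the superposition norms above must be estimated differently. Here I would apply Proposition \ref{admissible} to $f$ and Corollary \ref{admissible1} to each $\sigma_i$, which yields $\|f(u^{n-1})\|_{W^{m,p}}\leq C\big(1+\|u^{n-1}\|_{W^{1,mp}}^m+\|u^{n-1}\|_{W^{m,p}}\big)$ and the analogous bound for $\sigma_i$. Raising to the power $q$ and taking $\stred\sup_t$ produces the superlinear term $\stred\sup_t\|u^{n-1}(t)\|_{W^{1,mp}(\mt^N)}^{mq}$, whose exponent $mq$ is matched precisely by the auxiliary bound displayed above; this exponent-matching is the delicate bookkeeping of the argument. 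Once it is in place, everything reduces to the scalar recursion $K_n\leq C\,\stred\|u_0\|_{W^{m,p}}^q+C_T\big(1+L_0+K_{n-1}\big)$, with $K_n=\stred\sup_t\|u^n(t)\|_{W^{m,p}}^q$, $L_0=\stred\|u_0\|_{W^{1,mp}}^{mq}$ and $C_T=C\big(T^{q(1-\delta)}+T^{q/2}\big)$.

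Finally, I would fix $T$ small enough that $C_T<1$, solve the recursion to obtain the $n$-uniform $W^{m,p}$ estimate, and remove the smallness condition by the same partition of $[0,T]$ and flow (cocycle) property of mild solutions used in Proposition \ref{prop2}. Boundedness of $\{u^n\}$ in $L^q(\Omega;L^\infty(0,T;W^{m,p}(\mt^N)))$ then provides a weak-star convergent subsequence, and weak lower semicontinuity of the norm transfers the estimate to the limit $u$, giving \eqref{sobolev2}; the continuous modification follows once more from \cite[Corollary 3.5]{b1}.
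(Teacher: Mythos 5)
Your proposal follows essentially the same route as the paper: the same induction on the Picard iterates, the same auxiliary $W^{1,mp}$-bound with exponents $(mp,mq)$ closing the superlinear term from Proposition \ref{admissible} and Corollary \ref{admissible1}, the same factorization of the drift through $\mathcal{B}_{m,p}$ and of the stochastic convolution through the maximal inequality, and the same recursion-plus-partition argument and weak-star passage to the limit. The argument is correct as written.
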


\begin{proof}[Proof of Theorem \ref{smooth1}]
If $m=1$ the proof is an immediate consequence of Propositions \ref{prop1} and \ref{prop2}. The case $m\geq2$ follows from Propositions \ref{prop1}, \ref{prop2} and \ref{prop3}.
\end{proof}

%

\begin{proof}[Proof of Corollary \ref{smooth2}]
Let $m=k+1$. According to Theorem \ref{smooth1} there exists a solution of \eqref{semilin} which belongs to 
$$L^q(\Omega;C([0,T];W^{m,p}(\mt^N))),\qquad\forall p\in[2,\infty).$$
If $p>N$, then according to the Sobolev embedding theorem, the space $W^{m,p}(\mt^N)$ is continuously embedded in $C^{k,\lambda}(\mt^N)$ for $\lambda\in(0,1-N/p)$. 
Hence the assertion follows.
\end{proof}

\subsection*{Acknowledgment}
The author is indebted to Arnaud Debussche and Jan Seidler for valuable consultations and comments.


\begin{thebibliography}{19}
\bibitem{ama1} H. Amann, M. Hieber, G. Simonett, \textit{Bounded $H_\infty$-calculus for elliptic operators.} Diff. Int. Eq. \textbf{7} (1994), 613--653.
\bibitem{ama} H. Amann, \textit{Linear and Quasilinear Parabolic Problems.} Birkh\"{a}user Verlag, Basel, Boston, Berlin, 1995.
\bibitem{b1} Z. Brze\'zniak, \textit{On stochastic convolution in Banach spaces and applications.} Stoch. Stoch. Rep. \textbf{61} (1997), 245--295.
\bibitem{b2} Z. Brze\'zniak, \textit{Stochastic partial differential equations in M-type 2 Banach spaces.} Potential Anal. \textbf{4} (1995), 1--45.
\bibitem{flandoli} F. Flandoli, \textit{Dirichlet boundary value problem for stochastic parabolic equations: Compatibility relations and regularity of solutions.} Stoch. Stoch. Rep. \textbf{29} (3) (1990), 331--357.
\bibitem{gyongy} I. Gy\"{o}ngy, C. Rovira, \textit{On $L^p$-solutions of semilinear stochastic partial diffe\-rential equations.} Stochastic Process. Appl. \textbf{90} (2000), 83--108.
\bibitem{hof} M. Hofmanov\'a, \textit{Degenerate parabolic stochastic partial differential equations.} to appear.
\bibitem{krylov1} N. V. Krylov, \textit{A $W^n_2$-theory of the Dirichlet problem for SPDEs in general smooth domains.} Probab. Theory Related Fields \textbf{98} (3) (1994), 389--421.
\bibitem{krylov2} N. V. Krylov, B. L. Rozovskii, \textit{On the Cauchy problem for linear stochastic partial differetial equations.} Izv. Akad. Nauk. SSSR Ser. Mat. \textbf{41} (6) (1977), 1329-1347; English transl. Math. USSR Izv. \textbf{11} (1977).
\bibitem{krylov3} N. V. Krylov, B. L. Rozovskii, \textit{Stochastic evolution equations.} Itogi Nauki i Tekhniki. Ser. Sovrem. Probl. Mat. \textbf{14}, VINITI, Moscow, 1979, 71--146; English transl. J. Sov. Math., \textbf{16} (4) (1981), 1233--1277.
\bibitem{ondrejat3} M. Ondrej\'at, \textit{Uniqueness for stochastic evolution equations in Banach spaces.} Dissertationes Mathematicae \textbf{426} (2004), 1--63.
\bibitem{pazy} A. Pazy, \textit{Semigroups of Linear Operators and Applications to Partial Differential Equation.} Applied Mathematical Sciences, Vol. 44, Springer-Verlag, New York, Berlin, Heidelberg, Tokyo, 1983.
\bibitem{runst} T. Runst, W. Sickel, \textit{Sobolev Spaces of Fractional Order, Nemytskij Operators, and Nonlinear Partial Differential Equations.} de Gruyter Series in Nonlinear Analysis and Applications, vol. 3, Walter de Gruyter \& Co., Berlin, 1996.
\bibitem{zhang} X. Zhang, \textit{Smooth solutions of non-linear stochastic partial differential equations driven by multiplicative noises.} Sci. China Math. \textbf{53} (2010), 2949--2972.
\bibitem{zhang1} X. Zhang, \textit{Regularities for semilinear stochastic partial differential equations.} J. Funct. Anal. \textbf{249} (2007), 454--476.

\end{thebibliography}
\end{document}